\newtheorem{theorem}{Theorem}
\newtheorem{proposition}{Proposition}
\newtheorem{claim}{Claim}
\newtheorem{corollary}[theorem]{Corollary}
\newtheorem{lemma}[theorem]{Lemma}
\newtheorem{question}{Question}
\newtheorem*{theoremNL}{Theorem}
\DeclareMathOperator{\rk}{rk}
\DeclareMathOperator{\Z}{\mathbb{Z}}
\newcommand{\ffp}{C_{p}*C_{p}}
\newcommand{\Ab}{\mathrm{Ab}}
\newcommand{\G}{\mathcal{G}}
\begin{document}
\bibliographystyle{plain}

%-----------------------------------------------------------
%-----------------------------------------------------------

\title{\textbf{Generalizing Magnus' characterization \\of free groups to some free products}}
\author{Khalid Bou-Rabee\thanks{University of Michigan, Email: \tt{khalidb@umich.edu}}~ and Brandon Seward\thanks{University of Michigan, Email: \tt{b.m.seward@gmail.com}}}

\maketitle

%-----------------------------------------------------------
%-----------------------------------------------------------

\begin{abstract}
A residually nilpotent group is \emph{$k$-parafree} if all of its lower central series quotients match those of a free group of rank $k$.
Magnus proved that $k$-parafree groups of rank $k$ are themselves free.
In this note we mimic this theory with finite extensions of free groups, with an emphasis on free products of the cyclic group $C_p$, for $p$ an odd prime.
We show that for $n \leq p$ Magnus' characterization holds for the $n$-fold free product $C_p^{*n}$ within the class of finite-extensions of free groups. Specifically, if $n \leq p$ and $G$ is a finitely generated, virtually free, residually nilpotent group having the same lower central series quotients as $C_p^{*n}$, then $G \cong C_p^{*n}$.
We also show that such a characterization does not hold in the class of finitely generated groups.
That is, we construct a rank $2$ residually nilpotent group $G$ that shares all its lower central series quotients with $\ffp$, but is not $\ffp$.
\end{abstract}
\vskip.1in
{\small{\bf keywords:}
\emph{parafree, lower central series, free products, residually nilpotent.}}

\section*{Introduction}

Let $C_m$ be the cyclic group of order $m$.
This note addresses whether free products of the form $C_p*C_p* \cdots *C_p$, denoted $C_p^{*n}$ for short, can be characterized by their lower central series quotients.
Recall that the \emph{lower central series} of a group $G$ is defined to be
\[
 \gamma_1(G) := G \text{ and }\gamma_k(G) := [G, \gamma_{k-1}(G)] \text{ for $k \geq 2$},
 \]
 where $[A,B]$ denotes the group generated by commutators of elements of $A$ with elements of $B$. The \emph{rank} of $G$, denoted $\rk(G)$, is the minimum size of a generating set of $G$.
In 1939, Magnus gave a beautiful characterization of free groups in terms of their lower central series quotients \cite{M1}.

\begin{theoremNL}[Magnus' characterization of free groups] \label{magnusTheorem}
Let $F_k$ be a nonabelian free group of rank $k$ and $G$ a group of rank $k$.
If $G/\gamma_i(G) \cong F_k/\gamma_i(F_k)$ for all $i$, then $G \cong F_k$.
\end{theoremNL}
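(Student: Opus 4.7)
My plan is to use the rank hypothesis to build a natural surjection $F_k \twoheadrightarrow G$ and then show it is injective by combining the lower central series information with the Hopf property of finitely generated nilpotent groups. Since $\rk(G) = k$, I would pick generators $g_1, \ldots, g_k$ of $G$ and send the free generators $x_1, \ldots, x_k$ of $F_k$ to them, producing a surjection $\phi \colon F_k \twoheadrightarrow G$. The entire task reduces to proving $\ker(\phi) = 1$.

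Next I would descend $\phi$ to each nilpotent quotient. Because $\phi(\gamma_i(F_k)) \subseteq \gamma_i(G)$, the map $\phi$ induces a surjection $\bar\phi_i \colon F_k/\gamma_i(F_k) \twoheadrightarrow G/\gamma_i(G)$. By hypothesis there is an abstract isomorphism $\psi_i \colon G/\gamma_i(G) \xrightarrow{\sim} F_k/\gamma_i(F_k)$, so $\psi_i \circ \bar\phi_i$ is a surjective endomorphism of the finitely generated nilpotent group $F_k/\gamma_i(F_k)$. Finitely generated nilpotent groups are residually finite (Hirsch), and finitely generated residually finite groups are Hopfian (Mal'cev), so $\psi_i \circ \bar\phi_i$ must in fact be an isomorphism. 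Consequently $\bar\phi_i$ is injective, which forces $\ker(\phi) \subseteq \gamma_i(F_k)$.

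Since this containment holds for every $i$, I get $\ker(\phi) \subseteq \bigcap_{i \geq 1} \gamma_i(F_k) = \{1\}$, the last equality being the classical residual nilpotence of free groups (also due to Magnus). Hence $\phi$ is an isomorphism and $G \cong F_k$.

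The step I expect to be the main obstacle is converting the existence of an \emph{abstract} isomorphism between $F_k/\gamma_i(F_k)$ and $G/\gamma_i(G)$ into injectivity of the \emph{specific} map $\bar\phi_i$ induced by $\phi$; this is precisely where the Hopf property of finitely generated nilpotent groups is indispensable. The residual nilpotence of $F_k$ in the final step is equally essential, as without it the intersection of the $\gamma_i(F_k)$ could be nontrivial and the argument would collapse. Both ingredients together are what make the rank hypothesis so potent, and one expects that generalizations to free products (as pursued in the rest of the paper) will hinge on suitable analogues of Hopficity and residual nilpotence for the candidate target group.
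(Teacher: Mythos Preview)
Your argument is correct and is the standard proof of Magnus' theorem. Note, however, that the paper does not supply its own proof of this statement: it is quoted in the introduction as a classical result of Magnus \cite{M1}. That said, your approach coincides exactly with the method the paper deploys in Proposition~\ref{typeTheorem}, which is a direct generalization of Magnus' theorem (take $\Gamma = F_k$, so $K = 1$ and every rank-$k$ group $G$ is automatically of Type~I). There the authors likewise use a surjection onto each nilpotent quotient, invoke the Hopf property of finitely generated nilpotent groups to upgrade it to an isomorphism, and then appeal to residual nilpotence of the target to conclude. So your proof is both valid and in the same spirit as the paper's own arguments.
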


Following this result, Hanna Neumann inquired whether it was possible for two residually nilpotent groups $G$ and $G'$ to have $G/\gamma_i(G) \cong G'/\gamma_i(G')$ for all $i$ without having $G \cong G'$ (see \cite{L1}).
Recall that a group $G$ is \emph{residually nilpotent} if $\cap_{k=1}^\infty \gamma_k(G) = \{1\}$.
Gilbert Baumslag \cite{baumslag-1967} gave a positive answer to this question (c.f. \cite{baumslag-1968}, \cite{bridson-grunewald}), thereby beginning the subject of parafree groups. A group $G$ is \emph{parafree} if:

\begin{enumerate}
\item $G$ is residually nilpotent, and
\item there exists a finitely generated free group $F$ with the property that $G/\gamma_i(G) \cong F/\gamma_i(F)$ for all $i$.
\end{enumerate}

By Magnus' Theorem, Baumslag's examples necessarily have rank different from the corresponding free group.
In \cite{bou}, the first author explored what happens when the role of a free group is played by the fundamental group of a closed surface of a given genus.
Our first main result is a Magnus Theorem for $n$-fold free products $C_p^{*n}$ within the class of finite-extensions of free groups.
The proof in Section \ref{ProofSection1} relies on a theorem of A.~Karrass, A.~Pietrowski, and D.~Solitar \cite{KPS73} (see also G.~P.~Scott \cite{S74}).

\begin{theorem} \label{MainTheorem3}
Let $p$ be an odd prime and let $n \leq p$.
Let $G$ be a residually nilpotent and finitely generated group such that
  $$G/\gamma_i(G) \cong C_p^{*n} / \gamma_i(C_p^{*n})$$
for every $i$. If $G$ is virtually free, then $G \cong C_p^{*n}$.
\end{theorem}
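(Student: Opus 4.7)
The plan is to combine the structure theorem of Karrass--Pietrowski--Solitar for finitely generated virtually free groups with the constraints coming from the lower central series matching, and to whittle away the resulting graph-of-groups decomposition of $G$ until only the free product $C_p^{*n}$ remains.

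First, since each $C_p^{*n}/\gamma_i(C_p^{*n})$ is a finite $p$-group, the hypothesis gives that $G/\gamma_i(G)$ is a finite $p$-group for every $i$, and together with residual nilpotence this forces $G$ to be residually a finite $p$-group. Hence every finite subgroup of $G$ is a $p$-group. By the Karrass--Pietrowski--Solitar theorem we may write $G = \pi_1(\mathcal{G},Y)$ as the fundamental group of a finite graph of finite groups, and by the preceding observation every vertex and edge group of $(\mathcal{G},Y)$ is a finite $p$-group. Furthermore, since $G^{\Ab} \cong C_p^n$ is finite, the first rational Betti number of $G$ vanishes, and because this Betti number equals $b_1(Y)$, the underlying graph $Y$ must be a tree. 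Thus $G$ is a successive amalgamated free product of finite $p$-groups over finite $p$-subgroups.

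The next and most delicate step is to rule out nontrivial amalgamations, that is, to show that every edge group of $(\mathcal{G},Y)$ is trivial. I would argue by comparing the dimensions of the first two lower central quotients of $G$ with their counterparts for $C_p^{*n}$. The matching of $G^{\Ab}$ with $C_p^n$ yields a rank equation relating the generator counts $d(A_v)$ of the vertex groups and the generator counts of the edge groups, while the matching of $\dim_{\mathbb{F}_p}\gamma_2(G)/\gamma_3(G)$ with the free Lie algebra value $\binom{n}{2}$ gives a second equation involving the cross-commutator sums $\sum_{u<v}d(A_u)d(A_v)$ and the internal spaces $\gamma_2(A_v)/\gamma_3(A_v)$. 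A Cauchy--Schwarz type convexity inequality, with equality only when each $d(A_v)=1$, combined with the non-negativity of the internal spaces and the rank equation, should then force all edge groups to be trivial, every $d(A_v)=1$, and each $\gamma_2(A_v) = \gamma_3(A_v)$.

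Once the tree of groups has only trivial edge groups, $G \cong A_1 * \cdots * A_m$ is an ordinary free product of finite $p$-groups with each $d(A_v)=1$; by the Burnside basis theorem each $A_v$ is then cyclic, $A_v \cong C_{p^{\ell_v}}$. Reading off $G^{\Ab} \cong \bigoplus_v C_{p^{\ell_v}} \cong C_p^n$ forces every $\ell_v = 1$, so $A_v \cong C_p$ and $m = n$, yielding $G \cong C_p^{*n}$. The main obstacle is making the rank-counting step rigorous: one must identify precisely how the graded Lie algebra of a graph-of-groups decomposition behaves in low degrees, and the hypothesis $n \le p$ is expected to enter exactly here, guaranteeing that the $p$-th power operation of the restricted free Lie algebra does not yet interfere with the degree-two Witt-type dimension count.
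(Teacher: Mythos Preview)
Your opening reductions are correct and match the paper: residual $p$-finiteness forces all finite subgroups to be $p$-groups, Karrass--Pietrowski--Solitar yields a finite graph of finite $p$-groups, and $b_1(G;\mathbb{Q})=0$ forces the underlying graph to be a tree.

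The gap is in your middle step. Even in the easiest case of trivial edge groups, the two equations you extract from degrees $1$ and $2$ do \emph{not} force $d(A_v)=1$. If $G=A_1*\cdots*A_m$ with $d_v=\dim_{\mathbb{F}_p}A_v^{\Ab}$ and $\sum_v d_v=n$, the Lie-coproduct description of the associated graded gives
\[
\dim_{\mathbb{F}_p}\gamma_2(G)/\gamma_3(G)=\sum_v \dim_{\mathbb{F}_p}\gamma_2(A_v)/\gamma_3(A_v)+\sum_{u<v}d_u d_v,
\]
and since $\sum_{u<v}d_u d_v=\binom{n}{2}-\sum_v\binom{d_v}{2}$, matching with $\binom{n}{2}$ only yields $\sum_v\dim\gamma_2(A_v)/\gamma_3(A_v)=\sum_v\binom{d_v}{2}$. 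As $\dim\gamma_2(A_v)/\gamma_3(A_v)\le\binom{d_v}{2}$ always, equality is forced termwise---but this says nothing about the individual $d_v$. Concretely, for $n=2$ the Heisenberg group of order $p^3$ (a single vertex, no edges) has $d_1=2$ and $\dim\gamma_2/\gamma_3=1=\binom{2}{2}$, so it passes both of your tests while having $d_1\ne1$; of course it fails to be para-$(\ffp)$ only at degree $3$, which you never invoke. With nontrivial edge groups the degree-$2$ computation is considerably harder and you have not carried it out. Your guess about where $n\le p$ enters is also off: the $p$-power map in the restricted Lie algebra sends degree $1$ to degree $p$, so for odd $p$ it never interferes with degree $2$ regardless of $n$.

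The paper's second equation is of a completely different nature. It first shows, using the full para hypothesis, that $[G,G]$ is torsion-free and hence (Stallings) free, of the same rank and index as $[C_p^{*n},C_p^{*n}]$; this also forces every vertex group to be elementary abelian. The Karrass--Pietrowski--Solitar Euler-characteristic formula then reads
\[
\sum_{e}\frac{1}{|E_e|}-\sum_{v}\frac{1}{|V_v|}=n-1-\frac{n}{p},
\]
while counting $|\Ab(G)|$ gives $\prod_v|V_v|/\prod_e|E_e|=p^n$. The hypothesis $n\le p$ enters only at the very end: one multiplies the displayed identity by the largest vertex order $v_k$ and reduces modulo $p$, obtaining that the number of vertices of maximal order is $\equiv 0\pmod p$; since this number is positive and at most $k\le n\le p$, it must equal $p$, forcing $k=n$ and collapsing the tree to $C_p^{*n}$.
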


Our second main result, which we prove in Section \ref{ProofSection2}, shows that it is impossible to drop ``virtually free'' from Theorem \ref{MainTheorem3}. In fact, we construct rank two residually nilpotent groups that  have all the same lower central series quotients as $\ffp$ but are not $\ffp$.
These examples are analogous to Baumslag's parafree groups, where the role of free groups is replaced by $\ffp$.
Consequently, such groups are called \emph{para-$(\ffp)$ groups}.

\begin{theorem} \label{MainTheorem1}
Let $p$ be an odd prime.
There exist rank two groups $G_1$ and $G_2$, both not isomorphic to $\ffp$, such that
 $$G_1/\gamma_i(G_1) \cong G_2/\gamma_i(G_2) \cong \ffp / \gamma_i(\ffp)$$
 for all $i$. Further, $G_1$ is residually nilpotent and $G_2$ is not.
\end{theorem}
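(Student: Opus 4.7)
The core tool is Stallings' theorem on integral homology and lower central series: a homomorphism $\phi:A\to B$ that induces an isomorphism on $H_1(-;\mathbb{Z})$ and a surjection on $H_2(-;\mathbb{Z})$ induces isomorphisms $A/\gamma_i A\cong B/\gamma_i B$ for every $i$. The enabling computation is
\[
H_2(\ffp;\mathbb{Z}) \;=\; H_2(C_p;\mathbb{Z})\oplus H_2(C_p;\mathbb{Z}) \;=\; 0,
\]
which follows from the wedge decomposition $K(\ffp,1)\simeq K(C_p,1)\vee K(C_p,1)$ and the vanishing of $H_2$ of a finite cyclic group. Consequently Stallings' $H_2$-hypothesis is automatic for any map into $\ffp$, and for a map out of $\ffp$ it reduces to requiring $H_2$ of the target to vanish.

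For $G_2$, the plan is to exhibit a two-generator, two-relator presentation such as
\[
G_2 \;=\; \langle a,b \mid a^p\cdot[b,a^p]^{k},\ b^p\rangle
\]
for an appropriately chosen $k\ge 2$. Both relators lie in $[F(a,b),F(a,b)]$ and in the normal closure of $\{a^p,b^p\}$ in $F(a,b)$, so the obvious map $G_2\to\ffp$ sending generators to generators is well-defined and induces an isomorphism on $H_1$; Stallings' theorem then yields $G_2/\gamma_i(G_2)\cong\ffp/\gamma_i(\ffp)$ for every $i$. The defining relation rewrites as $a^p=[a^p,b]^k$, which inductively forces $a^p\in\gamma_n(G_2)$ for every $n$. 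To conclude that $G_2$ is not residually nilpotent it therefore suffices to check $a^p\ne 1$ in $G_2$, and I would verify this by mapping $G_2$ onto an explicit metabelian or Baumslag--Solitar-type quotient in which the image of $a^p$ is nonzero.

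For $G_1$, the strategy is to produce a rank two residually nilpotent group together with a homomorphism $\iota:\ffp\to G_1$ inducing an isomorphism on $H_1(-;\mathbb{Z})$ and with $H_2(G_1;\mathbb{Z})=0$. Stallings then matches the lower central series quotients, and residual nilpotence of $\ffp$ forces $\iota$ to be injective. To arrange $G_1\not\cong\ffp$ one invokes Theorem \ref{MainTheorem3}: since $n=2\le p$ for every odd prime $p$, any virtually free, residually nilpotent, rank two group matching these lower central series quotients must be $\ffp$, so it is enough to ensure that $G_1$ is not virtually free. A natural candidate is a carefully chosen one-relator-style, rank two presentation whose single defining relator lies deep in the lower central series of the ambient free group, so that the nilpotent completion is undisturbed but virtual freeness is destroyed. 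Residual nilpotence can then be verified either by embedding $G_1$ into the pronilpotent completion of $\ffp$ (which, by Stallings, is also the pronilpotent completion of $G_1$) or through a Magnus-expansion-style filtration argument.

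The principal obstacle is the construction of $G_1$. The four constraints---rank two, residually nilpotent, matching lower central quotients with $\ffp$, and (by Theorem \ref{MainTheorem3}) not virtually free---interact delicately: most straightforward modifications of the standard presentation $\langle a,b\mid a^p,b^p\rangle$ either collapse back to $\ffp$ or perturb the lower central series. The heart of the argument is therefore to write down a presentation whose relator is deep enough in the lower central series to be invisible to Stallings' test while sharp enough to break virtual freeness, and then to verify residual nilpotence by hand.
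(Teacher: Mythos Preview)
Your Stallings-theorem framework for matching lower central series quotients is a valid alternative to the paper's approach (which instead observes that $a\mapsto a[b,a]$, $b\mapsto b$ induces an automorphism of each free nilpotent quotient $F/\gamma_i F$ via Lemma~\ref{generatorlemma} and the Hopfian property). But the proposal has a genuine gap: you never actually construct $G_1$. You articulate the constraints---rank two, residually nilpotent, matching nilpotent quotients, not virtually free---and then stop, and this is precisely where the content lies. The paper's solution is concrete: take $G=\langle a,b\mid (a[b,a])^p,\ b^p\rangle$ and set $G_1=G/\bigcap_k\gamma_k(G)$, so residual nilpotence is automatic; the proof that $G_1\not\cong\ffp$ is a direct normal-form computation in $\ffp$ showing that $\gamma[b,\gamma]$ has infinite order whenever $\gamma$ is not a power of $b$ (Claim~\ref{infOrderLemma}). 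No appeal to Theorem~\ref{MainTheorem3} is needed, and your invocation of it is superfluous anyway: ``not virtually free'' already implies ``not isomorphic to $\ffp$'' since $\ffp$ is virtually free, without any para-$\ffp$ machinery.

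For $G_2$ your candidate $\langle a,b\mid a^p[b,a^p]^k,\ b^p\rangle$ is plausible and the inductive argument that $a^p\in\bigcap_n\gamma_n(G_2)$ is correct, but the crucial claim $a^p\ne 1$ is left as ``I would verify this by mapping onto an explicit metabelian or Baumslag--Solitar-type quotient''---a plan, not a proof. (A small slip: the relators do \emph{not} lie in $[F,F]$; what you need, and what is true, is that they lie in the normal closure of $\{a^p,b^p\}$ and are congruent to $a^p,b^p$ modulo $[F,F]$.) The paper uses the different group $\langle a,b\mid (a[a^p,b])^p,\ b^p\rangle$ and carries out the nontriviality check by a Reidemeister--Schreier computation of the abelianization of an index-$p$ subgroup (Claim~\ref{linearAlgLemma}), obtaining a determinant $p^p-(p-1)^p\ne\pm 1$.
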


\noindent

\noindent
{\bf Remarks:}
First, the examples found in this note were discovered with the use of GAP \cite{GAP}. Second, it is natural to ask why we ignore the general case $C_m*C_n$ for arbitrary natural numbers $m,n$. We don't consider these groups because $C_m*C_n$ is residually nilpotent if and only if $m$ and $n$ are powers of a fixed prime $p$. 
The generalization of Theorem \ref{MainTheorem1} to the case $C_{p^l} * C_{p^k}$, for natural numbers $l,k$ and prime number $p$, is not hard, see Section \ref{generalizationSection}.
Finally, the paper is partially motivated by possible applications to three-manifold theory. Please see Section \ref{finalremarks} for remarks on this connection. 

\subsubsection*{Acknowledgements}

The first author would like to thank Tim Cochran for initiating this topic. The first author is also grateful to Benson Farb, Ben McReynolds, and Richard Canary for valuable discussions and support. The second author's research was supported by a National Science Foundation Graduate Research Fellowship. Finally, we are thankful to Tom Church for comments on a previous draft of this paper.

\section{Preliminaries and notation} \label{PrelimSection}

\paragraph{Known group theory results.}
We first list a couple of known results needed in the proofs of our main theorems.

\begin{lemma}[Lemma 5.9, page 350 in Magnus, Korass and Solitar \cite{MKS}]  \label{generatorlemma}
Let $G$ be a free nilpotent group of class $c$ and let $g_1, g_2,  \ldots \in G$ be elements whose projections to $G/[G, G]$ generate $G/[G,G]$.
Then $g_1, g_2,  \ldots$ generate $G$.
\end{lemma}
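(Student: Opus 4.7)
The plan is to proceed by induction on the nilpotency class $c$, with the base case $c = 1$ being trivial since then $G$ is abelian and the hypothesis directly says $g_1, g_2, \ldots$ generate $G$.

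For the inductive step, let $H$ be the subgroup of $G$ generated by $g_1, g_2, \ldots$, and suppose the result holds for all free nilpotent groups of class $c-1$. First I would consider the quotient $G/\gamma_c(G)$, which is a free nilpotent group of class $c-1$ whose abelianization is canonically identified with $G/[G,G]$. The projections of the $g_i$ to $G/\gamma_c(G)$ therefore generate the abelianization of $G/\gamma_c(G)$, so by the inductive hypothesis they generate $G/\gamma_c(G)$ itself. Translated back, this says $H \cdot \gamma_c(G) = G$, so it only remains to prove $\gamma_c(G) \subseteq H$.

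The key observation for this last step is that $\gamma_c(G)$ is central in $G$, because $[\gamma_c(G), G] = \gamma_{c+1}(G) = 1$. Now $\gamma_c(G)$ is generated (as a group) by $c$-fold nested commutators $[x_1,[x_2,\ldots,[x_{c-1},x_c]\ldots]]$ with $x_i \in G$. Using $H \cdot \gamma_c(G) = G$, write each $x_i = h_i z_i$ with $h_i \in H$ and $z_i \in \gamma_c(G) \subseteq Z(G)$. Since $z_i$ is central, a direct calculation gives $[h_i z_i, y] = [h_i, y]$ and $[y, h_i z_i] = [y, h_i]$ for every $y \in G$. Peeling the nested commutator from the inside out, these identities collapse the expression to $[h_1,[h_2,\ldots,[h_{c-1},h_c]\ldots]] \in H$. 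Hence every generator of $\gamma_c(G)$ lies in $H$, so $\gamma_c(G) \subseteq H$, completing the induction.

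The main obstacle, if any, is the central-element commutator manipulation in the final step; but this is routine once one notices that $\gamma_c(G)$ sits in the center, and the argument is essentially bookkeeping. All other steps are formal consequences of the definitions and the inductive hypothesis applied to $G/\gamma_c(G)$.
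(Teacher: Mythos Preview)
Your proof is correct and follows the standard inductive argument. The paper does not give its own proof of this lemma; it is quoted from Magnus--Karrass--Solitar, where the proof proceeds along the same lines you take (show $H\gamma_c(G)=G$ via the quotient, then absorb $\gamma_c(G)$ into $H$ using centrality). One small remark: your argument never uses the hypothesis that $G$ is \emph{free} nilpotent---the statement and your proof go through verbatim for any nilpotent group of class $c$.
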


\begin{lemma}[Theorem 4.1 in \cite{MKS}]  \label{NormalFfpForm}
If $\gamma \neq 1$ is in $\ffp = \langle a, b : a^p, b^p \rangle$, then there exists a unique reduced sequence $g_1, \ldots, g_k$ such that
\begin{equation} \label{normalform}
\gamma = g_1 \cdots g_k,
\end{equation}
where $g_i$ are elements from $\{ a, a^{2}, \ldots, a^{p-1}, b, b^{2}, \ldots, b^{p-1} \}$.
In particular, if $\gamma^p = 1$, then $\gamma$ is conjugate to either $a$ or $b$.
\end{lemma}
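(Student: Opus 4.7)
The existence and uniqueness of the normal form are the standard normal form theorem for free products, which the authors cite from MKS. For existence, I would take an arbitrary word in $a^{\pm 1}, b^{\pm 1}$, rewrite each negative power via $a^{-1} = a^{p-1}$ and $b^{-1} = b^{p-1}$, and then iteratively combine adjacent same-factor letters modulo $p$, deleting any trivial syllables that arise. For uniqueness, I would use the van der Waerden trick: let $X$ be the set of formal reduced sequences over $\{a,\ldots,a^{p-1},b,\ldots,b^{p-1}\}$ together with the empty sequence, define permutations $\sigma_a, \sigma_b$ of $X$ realizing left multiplication by $a$ and $b$ while preserving the syllable structure, and check $\sigma_a^p = \sigma_b^p = \mathrm{id}$. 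This yields a homomorphism $\ffp \to \mathrm{Sym}(X)$ under which distinct reduced sequences act differently on the empty sequence, so they represent distinct elements of $\ffp$.

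The substantive content lies in the ``in particular'' clause, which I would prove by a cyclic reduction argument. Given $\gamma \neq 1$ with $\gamma^p = 1$, write $\gamma = g_1 \cdots g_k$ in normal form. If $k \geq 2$ and $g_1, g_k$ lie in the same factor, I conjugate by $g_1$: the conjugate $g_1^{-1} \gamma g_1$ has its original trailing $g_k$ now adjacent to $g_1$ in the same factor, and renormalizing (combining or cancelling this pair) strictly reduces the syllable count. Iterating, I may assume $\gamma$ has been conjugated so that either $k = 1$, or $k \geq 2$ with $g_1$ and $g_k$ lying in different factors, i.e., the conjugate is cyclically reduced. In the latter case, forming $\gamma^p = g_1 \cdots g_k \, g_1 \cdots g_k \, \cdots \, g_1 \cdots g_k$ introduces no cancellation at the seams, so by the uniqueness of the normal form the product has syllable length $kp \geq 2$, contradicting $\gamma^p = 1$. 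Hence after conjugation $k = 1$, so $\gamma$ is conjugate to a single syllable in $\{a, a^2,\ldots, a^{p-1}\} \cup \{b, b^2,\ldots, b^{p-1}\}$.

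The main obstacle is purely bookkeeping in the cyclic reduction step, splitting into the subcase $g_k = g_1^{-1}$ (two syllables disappear) versus $g_k \neq g_1^{-1}$ but both in the same factor (two syllables merge into one nontrivial syllable); both strictly decrease $k$, so the process terminates. One minor caveat about the statement as written: the argument only produces a conjugate to some $a^i$ or $b^j$ with $1 \leq i,j \leq p-1$, not literally to $a$ or $b$, but this suffices for all subsequent uses of the lemma since these elements generate the same cyclic subgroup $\langle a \rangle$ or $\langle b \rangle$.
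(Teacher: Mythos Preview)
The paper does not prove this lemma; it is quoted as a known result (Theorem 4.1 in \cite{MKS}) and no argument is supplied. Your proposal is a correct and entirely standard proof: the van der Waerden permutation-representation trick for uniqueness of the normal form, followed by cyclic reduction to handle the torsion claim, is exactly how this is done in the classical sources. There is nothing to compare against in the paper itself.

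Your closing caveat is well taken. The argument you give only shows that a nontrivial $\gamma$ with $\gamma^p = 1$ is conjugate to some $a^i$ or $b^j$ with $1 \leq i,j \leq p-1$, not literally to $a$ or $b$, and indeed $a^2$ is \emph{not} conjugate to $a$ in $\ffp$ (distinct nontrivial powers of $a$ are already non-conjugate in the abelian factor $\langle a \rangle$, and conjugacy of finite-order elements in a free product reduces to conjugacy within a factor). So the lemma as stated is slightly loose. As you note, the only use of this clause in the paper (in the proof of Theorem \ref{stronglyTheoremExample}) is to conclude that $b$ is conjugate to one of the two given order-$p$ generators up to a power, and the argument there goes through unchanged with the corrected conclusion.
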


\paragraph{Graph of groups.} We state here our notation for graphs of groups. A \emph{graph of groups} is a graph $\G$ where the vertices and edges of $\G$ are groups and for each edge $E$ of $\G$ there are monomorphisms $\phi_{E,0}$ and $\phi_{E,1}$ mapping $E$ into the vertex groups joined by $E$.

Let $T$ be a spanning tree for $\G$ and define the \emph{fundamental group} of $\G$, denoted $\pi_1(\G)$, to be the group generated by the vertex groups together with elements $\{Y_E : E \text{ an edge of } \G\}$ subject to the following conditions:
\begin{itemize}
\item $Y_{\overline{E}} = Y_E^{-1}$ if $\overline{E}$ is the edge $E$ with the reverse orientation;
\item $Y_E \phi_{E,0} (g) Y_E^{-1} = \phi_{E,1}(g)$ for every edge $E$ and every $g \in E$;
\item $Y_E = 1$ if $E$ is an edge of $T$.
\end{itemize}
This definition is independent of the choice of spanning tree.

\paragraph{Para-$\Gamma$ groups}
In the remainder of this section, we generalize Baumslag's definition of parafree groups to arbitrary groups. 
Let $\Gamma$ be a finitely generated residually nilpotent group.
A group $G$ is a \emph{weakly para-$\Gamma$ group} if $G/\gamma_k(G) \cong \Gamma / \gamma_k(\Gamma)$ for all $k \geq 1$.
If $G$ is weakly para-$\Gamma$ and residually nilpotent, we say that $G$ is a \emph{para-$\Gamma$ group}. Let $G = F/N$ be a weakly para-$\Gamma$ group where $F$ is a free group of rank $\rk(\Gamma)$.
Let $\Gamma = F/K$.
Then we have the following trichotomy for such groups $G$:
\begin{itemize}
\item[] Type I. There exists an isomorphism $\phi: F \to F$ such that $\phi(N) \geq K$.
\item[] Type II. There exists an isomorphism $\phi: F \to F$ such that $\phi(N) \lneq  K$.
\item[] Type III. $G$ is not of Type I or II.
\end{itemize}

We note that it is not clear, a priori, that there does not exist groups that are both Type I and Type II. This fact is a consequence of the next theorem, which is a slightly more general version of a theorem appearing in \cite{bou}.

\begin{proposition} \label{typeTheorem}
Groups of Type I must be $\Gamma$.
Further, groups of Type II are never para-$\Gamma$ groups.
\end{proposition}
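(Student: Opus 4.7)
My plan is to derive both claims from the Hopfian property of finitely generated nilpotent groups, combined with residual nilpotence of $\Gamma$. Recall that a group is Hopfian if every surjective endomorphism is an isomorphism; finitely generated nilpotent groups are polycyclic and residually finite, so they are Hopfian.

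For Type I, given an automorphism $\phi$ of $F$ with $\phi(N) \supseteq K$, the projection $F \twoheadrightarrow F/\phi(N)$ vanishes on $K$ and so factors through $\Gamma = F/K$. Composing with the isomorphism $F/\phi(N) \cong F/N = G$ induced by $\phi$ produces a surjection $\pi : \Gamma \twoheadrightarrow G$. This map carries $\gamma_k(\Gamma)$ onto $\gamma_k(G)$ and descends to a surjection $\bar\pi_k : \Gamma/\gamma_k(\Gamma) \twoheadrightarrow G/\gamma_k(G)$ between isomorphic finitely generated nilpotent groups. Post-composing $\bar\pi_k$ with the abstract isomorphism $G/\gamma_k(G) \cong \Gamma/\gamma_k(\Gamma)$ yields a surjective endomorphism of $\Gamma/\gamma_k(\Gamma)$, which is an isomorphism by Hopfianity, so $\bar\pi_k$ itself is an isomorphism. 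Hence $\ker \pi \subseteq \gamma_k(\Gamma)$ for every $k$, and residual nilpotence of $\Gamma$ forces $\ker \pi = 1$, giving $G \cong \Gamma$.

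For Type II, given $\phi(N) \subsetneq K$, the analogous construction produces a surjection $\psi : G \twoheadrightarrow \Gamma$ whose kernel $H$ is nontrivial. I would then consider the composition
\[
G \xrightarrow{\psi} \Gamma \twoheadrightarrow \Gamma/\gamma_k(\Gamma) \cong G/\gamma_k(G),
\]
whose target has nilpotency class less than $k$, so $\gamma_k(G)$ is contained in the kernel and the composition descends to a surjective endomorphism of the finitely generated nilpotent group $G/\gamma_k(G)$. Hopfianity upgrades this endomorphism to an isomorphism, identifying the kernel of the displayed map with $\gamma_k(G)$ exactly. In particular $H \subseteq \gamma_k(G)$ for every $k$, so $1 \neq H \subseteq \bigcap_k \gamma_k(G)$, and $G$ is not residually nilpotent, hence not para-$\Gamma$.

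The one place requiring care, in both halves, is the translation of the abstract isomorphism $\Gamma/\gamma_k(\Gamma) \cong G/\gamma_k(G)$ into a statement about a surjective self-map, after which Hopfianity can be invoked; the remaining manipulations are routine. As a bonus, this same argument shows that no group can be of both Type I and Type II, since Type I groups are isomorphic to $\Gamma$ and hence residually nilpotent, whereas Type II groups are not.
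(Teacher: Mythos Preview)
Your proof is correct and follows essentially the same approach as the paper: both arguments construct the natural surjection $\Gamma\twoheadrightarrow G$ (Type~I) or $G\twoheadrightarrow\Gamma$ (Type~II), pass to lower central quotients, and invoke Hopfianity of finitely generated nilpotent groups together with residual nilpotence. The only cosmetic difference is that the paper phrases both halves as contradictions, whereas you argue directly that $\ker\pi\subseteq\bigcap_k\gamma_k(\Gamma)=\{1\}$ and $H\subseteq\bigcap_k\gamma_k(G)$; the underlying logic is identical.
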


\begin{proof}
We first show that groups of Type I must be isomorphic to $\Gamma$.
For the sake of a contradiction, suppose that $G$ is of Type I and is not isomorphic to $\Gamma$.
Let $F$ and $K$ be as in the definition of Type I groups.
By assumption, there exists an isomorphism $\phi: F \to F$ such that $\phi(N) \geq K$.
The isomorphism $\phi^{-1}$ induces a homomorphism $\rho_i : \Gamma/\gamma_i(\Gamma) \to G/ \gamma_i(G)$ which is surjective for all $i$.
As finitely generated nilpotent groups are Hopfian (see Section III.A.19 in \cite{harpe-2000}, for instance), the maps $\rho_i$ must be isomorphisms for all $i$.
On the other hand, since $G$ is not isomorphic to $\Gamma$, we must have some $\gamma \in \phi(N) - K$.
Further, $F/K= \Gamma$ is residually nilpotent, so there exists some $i$ such that $\gamma \neq 1$ in $\Gamma/ \gamma_i(\Gamma)$.
Since $\gamma \in \ker \rho_i$, we have a contradiction.

We now show that groups of Type II are never residually nilpotent.
For the sake of a contradiction, suppose that $G$ is a residually nilpotent group of Type II.
Let $F$ and $K$ be as in the definition of Type II groups.
By assumption, the map $\phi: F \to F$, induces a map $\psi: G \to \Gamma$ that is onto with non-trivial kernel.
Let $g \in \ker \psi$.
Since $G$ is residually nilpotent, there exists $i$ such that $g \notin \gamma_i(G)$.
Hence, the induced map $\rho_i: G/\gamma_i(G) \to \Gamma/\gamma_i(\Gamma)$ is onto but not bijective, which is impossible as finitely generated nilpotent groups are Hopfian.
\end{proof}

\section{Characterizing $C_p^{*n}$ by its lower central series quotients} \label{ProofSection1}

In this section we prove that if $G$ is a para-$C_p^{*n}$ group and $n \leq p$, then $G$ is isomorphic to $C_p^{*n}$ provided it is finitely generated and virtually free. The following lemma is a key ingredient to the proof.

\begin{lemma} \label{MainLemma1}
Let $p$ be an odd prime and let $G$ be a group that is para-$C_p^{*n}$. If $G$ is virtually free and finitely generated, then $[G,G]$ is a free group. Moreover, $[G, G]$ and $[C_p^{*n}, C_p^{*n}]$ have the same rank and same index in $G$ and $C_p^{*n}$, respectively.
\end{lemma}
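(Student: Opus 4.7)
My plan has four stages. First, $G^{\mathrm{ab}} \cong (C_p^{*n})^{\mathrm{ab}} \cong C_p^n$, so $[G:[G,G]] = p^n = [C_p^{*n}:[C_p^{*n},C_p^{*n}]]$, settling the index claim immediately.

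Second, apply the Karrass-Pietrowski-Solitar theorem to realize $G \cong \pi_1(\mathcal{G})$ for some finite graph of finite groups $\mathcal{G}$. By residual nilpotence, every finite subgroup of $G$ embeds into some $G/\gamma_i(G) \cong C_p^{*n}/\gamma_i(C_p^{*n})$, and this quotient is a finite $p$-group: its abelianization is $C_p^n$, and the commutator-induced surjection $G^{\mathrm{ab}} \otimes \gamma_{k-1}(G)/\gamma_k(G) \twoheadrightarrow \gamma_k(G)/\gamma_{k+1}(G)$ propagates the $p$-group property up the lower central series. Hence all vertex and edge groups of $\mathcal{G}$ are finite $p$-groups. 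The standard spanning-tree presentation of $\pi_1(\mathcal{G})$ shows that $G^{\mathrm{ab}}$ contains a free abelian summand of rank $e(\mathcal{G}) - v(\mathcal{G}) + 1$ generated by the stable letters $Y_E$ for edges outside the tree; since $G^{\mathrm{ab}}$ is finite, $\mathcal{G}$ must be a tree.

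The main step is showing $[G,G]$ is free. The commutator subgroup inherits the action of $G$ on the Bass-Serre tree of $\mathcal{G}$, with stabilizers of the form $[G,G] \cap gVg^{-1}$ for $V$ a vertex or edge group. It suffices to show every finite subgroup $H \leq G$ injects into $G^{\mathrm{ab}} = C_p^n$; then all such stabilizers are trivial, $[G,G]$ acts freely on the tree, and is therefore free (being the fundamental group of the quotient graph with trivial vertex and edge groups). This is the main obstacle: the para-$C_p^{*n}$ hypothesis, together with Lemma \ref{NormalFfpForm} (whose analogue for $n$ factors identifies every finite subgroup of $C_p^{*n}$ as a conjugate of one of the $C_p$ factors), should force every finite subgroup of $G$ to be cyclic of order $p$ with nontrivial image in $G^{\mathrm{ab}}$. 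Transferring this restriction through the finite nilpotent quotients $G/\gamma_i(G)$ back to $G$ itself requires care. Once this is done, Swan's theorem — a torsion-free virtually free group is free — completes the step.

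Finally, for the rank: with $[G,G]$ free of rank $r$, the identity $1 - r = [G:[G,G]]\,\chi(G) = p^n \chi(G)$ together with the graph-of-groups Euler characteristic formula $\chi(G) = \sum_v 1/|V_v| - \sum_e 1/|E_e|$ and the structural restrictions on vertex and edge group sizes pinned down in the previous step should force $\chi(G) = \chi(C_p^{*n}) = n/p - (n-1)$, whence $r = 1 - p^n\chi(C_p^{*n}) = \rk[C_p^{*n},C_p^{*n}]$.
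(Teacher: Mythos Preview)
Your plan has two genuine gaps, and the paper's argument proceeds quite differently.

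\textbf{The torsion-freeness step.} Your step 3 is the heart of the lemma: showing that every finite subgroup of $G$ injects into $G^{\mathrm{ab}}$ is exactly equivalent to showing $[G,G]$ is torsion-free. You acknowledge this is ``the main obstacle'' but do not resolve it, and the route you suggest---transferring the fact that finite subgroups of $C_p^{*n}$ are cyclic through the nilpotent quotients---does not work. The quotients $G/\gamma_i(G) \cong C_p^{*n}/\gamma_i(C_p^{*n})$ are finite $p$-groups, and their commutator subgroups $\gamma_2/\gamma_i$ are full of $p$-torsion (already $\gamma_2/\gamma_3$ is elementary abelian of exponent $p$). So a putative $p$-torsion element $h \in [G,G]$ maps to a perfectly legitimate $p$-torsion element of $\gamma_2(G/\gamma_i(G))$, and nothing about finite subgroups of $C_p^{*n}$ itself rules this out. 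The paper instead analyzes $\Ab(\gamma_k(G))$ directly for each $k \geq 2$: using that $G$ is para-$C_p^{*n}$, it constructs surjections onto carefully chosen $p$-group quotients (via verbal $p$-power subgroups) in both directions to pin down the free rank of $\Ab(\gamma_k(G))$ and to show its $p$-primary torsion vanishes. A $p$-torsion element of $[G,G]$ would, by residual nilpotence, survive in some $\Ab(\gamma_r(G))$, contradicting this. Residual $p$-finiteness then rules out all torsion.

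\textbf{The rank step.} Your Euler-characteristic argument is circular. With $[G,G]$ free of rank $r$ and index $p^n$, one has $\chi(G) = (1-r)/p^n$; so $\chi(G) = \chi(C_p^{*n})$ is \emph{equivalent} to $r = \rk[C_p^{*n}, C_p^{*n}]$, not a route to it. Computing $\chi(G)$ from the graph of groups would require knowing the vertex and edge group orders, and nothing in your step 3 determines them (at best it shows they are elementary abelian $p$-groups of rank $\leq n$). The paper gets the rank for free from the same $\Ab(\gamma_k(G))$ computation at $k = 2$: the free rank of $\Ab([G,G])$ equals that of $\Ab([C_p^{*n}, C_p^{*n}])$, and once $[G,G]$ is free this is its rank.

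Note also that the Karrass--Pietrowski--Solitar graph-of-groups machinery you invoke is used by the paper only \emph{after} this lemma (in Lemma~\ref{LEM EQNS}), with the torsion-freeness of $[G,G]$ as input to constrain the vertex groups---the dependency runs the other way.
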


\begin{proof}
Let $k \geq 2$ be a natural number.
The group $\Ab(\gamma_k(G))$ is a finitely generated
abelian group, since $\gamma_k(G)$ is finite index in $G$. So
$$
\Ab(\gamma_k(G)) = \prod_{q} Q_q \times \prod_{i=1}^N \Z,
$$
where $N \in \mathbb{N}$ and each $Q_q$ is a finite $q$-group where the product is taken over at most finitely many primes $q$.
We aim to show that $N = \rk(\Ab(\gamma_k(C_p^{*n})))$ and $Q_p = 1$.
Let $V$ be the verbal subgroup of $\Ab(\gamma_k(G))$ given by
$$
V = \left< a : \exists b, b^p = a\right>.
$$
This is a characteristic subgroup of $\Ab(\gamma_k(G))$ and $\Ab(\gamma_k(G))/V = \prod_{i=1}^{N+ \rk(Q_p)}C_p$.
Set $K$ to be the pullback of the subgroup $V$ to $G$. The subgroup $K$ of $G$ is normal and $G/K$ is a $p$-group. Further, the map $\phi: G \to G/K$ satisfies
$$
\gamma_k(G/K) = \phi(\gamma_k(G)) = \Ab(\gamma_k(G))/V.
$$
Since $G$ is para-$C_p^{*n}$, there exists a map
$$
\psi : C_p^{*n} \to G/K,
$$
and $\psi(\gamma_k(C_p^{*n})) = \gamma_k(G/K) = \Ab(\gamma_k(G)) / V$.
Hence
 $$N + \rk(Q_p) \leq \rk(\Ab(\gamma_k(C_p^{*n}))) = R,$$
where $R$ is defined to be the rank of $\Ab(\gamma_k(C_p^{*n}))$. To get a reverse inequality, let $W$ be the characteristic subgroup of $\Ab(\gamma_k(C_p^{*n}))$ generated by all $p^r$ powers of elements.
That is
$$
W = \left< a : \exists b, b^{p^r} = a \right>.
$$
Then $\Ab(\gamma_k(C_p^{*n})) / W = \prod_{i=1}^{R} C_{p^r}$, since we know that $\gamma_k(C_p^{*n})$ is a free group.
Let $W'$ be the pullback of $W$ to $C_p^{*n}$. The group $W'$ is normal in $C_p^{*n}$ and $C_p^{*n}/W'$ is a $p$-group.
Since $G$ is para-$C_p^{*n}$ there must exist a surjective map $\phi : G \to \ffp/W'$.
Hence,
$$
\phi(\gamma_k(G)) = \gamma_k(C_p^{*n} / W') = \Ab(\gamma_k(C_p^{*n}))/W =  \prod_{i=1}^{R} C_{p^r}.
$$
Picking $r$ large enough gives the inequality $N \geq R$, so $N = R$ and $\rk(Q_p) = 0$.

We conclude that the torsion elements in $\Ab(\gamma_k(G))$ do not have $p$ power order.
We claim that this implies that $[G,G]$ must be torsion-free.
Suppose not, let $\gamma \in [G,G]$ be a nontrivial element with finite-order $n$.
Suppose that $p | n$, then $\gamma' = \gamma^{n/p}$ has order a power of $p$.
Then let $r$ be the first natural number such that $\gamma' \in \gamma_r(G) - \gamma_{r+1}(G)$.
Then $\gamma' \notin [\gamma_r(G), \gamma_r(G)]$, so $\gamma'$ survives in $\Ab(\gamma_r(G))$. It follows that $\gamma$ cannot have order that is divisible by $p$.
But $G$ is residually $p$-finite, being para-$C_p^{*n}$, hence $\gamma$'s power must be a power of $p$, a contradiction. So $[G, G]$ is torsion free. Also $[G, G]$ and $[C_p^{*n}, C_p^{*n}]$ have the same index in $G$ and $C_p^{*n}$, respectively, since $\Ab(G) = \Ab(C_p^{*n})$. Finally, by Stallings \cite{S68} $[G, G]$ and $[C_p^{*n}, C_p^{*n}]$ are free groups as they are torsion free and virtually free. These two groups have the same rank since the above argument shows that
$$\rk([G, G]) = \rk(\Ab([G, G])) = \rk(\Ab([C_p^{*n}, C_p^{*n}])) = \rk([C_p^{*n}, C_p^{*n}]).$$
\end{proof}

We will soon show that every finitely generated, virtually free, para-$C_p^{*n}$ group $G$ is the fundamental group of a finite graph of groups where the underlying graph is a tree. The following two lemmas will allow us to describe this tree more explicitly. In our arguments we implicitly use the well known fact that every tree with $k$ vertices has $k-1$ edges.

\begin{lemma} \label{LEM INDX}
Let $\G$ be a tree, and let $u$ be a vertex in $\G$. Then there are enumerations $v_1, v_2, \ldots, v_k$ and $e_1, e_2, \ldots, e_{k-1}$ of the vertices and edges of $\G$, respectively, such that $v_k = u$ and $v_i$ is an endpoint of $e_i$ for all $1 \leq i < k$. Furthermore, for every $1 \leq i < k$ we have that $v_i$ is the endpoint of $e_i$ which is furthest from $v_k = u$.
\end{lemma}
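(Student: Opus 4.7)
My plan is to root $\G$ at $u$ and read off the enumerations directly from the distance function. First I would list the vertices $v_1, v_2, \ldots, v_k$ in any order such that $d_\G(v_i, u)$ is weakly decreasing in $i$; this forces $v_k = u$, since $u$ is the unique vertex at distance zero from itself. For each $i < k$ I would then let $e_i$ be the first edge on the unique simple path in $\G$ from $v_i$ to $u$. By construction $v_i$ is an endpoint of $e_i$, and its other endpoint $w$ satisfies $d_\G(w,u) = d_\G(v_i,u) - 1$, so $v_i$ is automatically the endpoint of $e_i$ farther from $u$.

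The only thing left to verify is that $i \mapsto e_i$ is a bijection onto the edge set. The cardinalities match, since a tree on $k$ vertices has exactly $k-1$ edges, so it suffices to check injectivity. For this I would use the standard observation that removing any edge from a tree splits it into exactly two components; hence each edge of $\G$ has a well-defined ``farther-from-$u$'' endpoint, namely the endpoint lying in the component not containing $u$. By construction this endpoint is $v_i$ whenever $e = e_i$, so distinct indices yield distinct edges, giving the required bijection.

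I do not expect any genuine obstacle here; the argument is essentially the remark that in a rooted tree every non-root vertex has a unique parent edge, and the ``farther endpoint'' clause is built into the construction. As an alternative I could prove the lemma by induction on $k$, peeling off a leaf $\ell \neq u$ (which exists for $k \geq 2$ since every finite tree with at least two vertices has at least two leaves) together with its incident edge and applying the inductive hypothesis to the resulting tree on $k-1$ vertices; distances in the pruned subtree agree with distances in $\G$ for the remaining vertices, so the inductive condition transfers back to $\G$ without modification.
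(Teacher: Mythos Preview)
Your argument is correct and is essentially the same as the paper's: both exploit the bijection between non-root vertices and their parent edges in the tree rooted at $u$. The only cosmetic difference is that the paper enumerates the edges arbitrarily first and then defines $v_i$ as the endpoint of $e_i$ farther from $u$, whereas you enumerate the vertices first (with an unnecessary but harmless distance-ordering) and then assign parent edges; the injectivity check is the same in both directions.
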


\begin{proof}
Let $e_1, e_2, \ldots, e_{k-1}$ be any enumeration of the edges of $\G$. For each $1 \leq i < k$, let $v_i$ be the endpoint vertex of $e_i$ which is further from $u$. Since $\G$ is a tree, we have $v_i \neq v_j$ for $i \neq j$. Now set $v_k = u$. Since $\G$ has $k$ vertices and $v_1, v_2, \ldots, v_k$ are distinct, this must be an enumeration of the vertices of $\G$. Clearly the enumerations of the vertices and the edges have the desired properties.
\end{proof}

\begin{lemma} \label{LEM ABTREE}
If $\G$ is a finite tree of finite abelian groups then the order of $\Ab(\pi_1(\G))$ is the product of the orders of the vertex groups divided by the product of the orders of the edge groups.
\end{lemma}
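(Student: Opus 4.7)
The plan is to induct on the number $k$ of vertices of $\G$. The base case $k = 1$ is immediate: there are no edges, $\pi_1(\G)$ equals the unique abelian vertex group $V$, and $|\Ab(\pi_1(\G))| = |V|$, in agreement with the formula.

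For the inductive step, I would apply Lemma \ref{LEM INDX} with any chosen root $u$ to obtain an enumeration in which $v_1$ is a leaf of $\G$ whose unique incident edge is $e_1$. Let $\G'$ be the subtree of groups obtained by deleting $v_1$ and $e_1$; it has $k-1$ vertices. Since the spanning tree of $\G$ is all of $\G$, every generator $Y_E$ becomes trivial in the presentation of $\pi_1(\G)$, and the surviving relations reduce to the identifications $\phi_{e,0}(g) = \phi_{e,1}(g)$ for each edge $e$ and each $g$ in the edge group. Peeling off the leaf yields the amalgamated free product decomposition
$$\pi_1(\G) \;\cong\; \pi_1(\G') *_{E_{e_1}} V_{v_1},$$
where $E_{e_1}$ embeds into $V_{v_1}$ via $\phi_{e_1,0}$ and maps into $\pi_1(\G')$ via $\phi_{e_1,1}$ composed with the inclusion of the vertex group at the other endpoint of $e_1$.

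Next I would abelianize. Because abelianization is a left adjoint it preserves pushouts, so
$$\Ab(\pi_1(\G)) \;\cong\; \bigl(\Ab(\pi_1(\G')) \oplus V_{v_1}\bigr)\big/H,$$
where $H$ is generated by the elements $(\iota(c),\, -\phi_{e_1,0}(c))$ for $c \in E_{e_1}$, with $\iota: E_{e_1} \to \Ab(\pi_1(\G'))$ the induced map. The essential point is that $\phi_{e_1,0}$ is injective by the monomorphism axiom in the definition of a graph of groups, so the assignment $c \mapsto (\iota(c),\, -\phi_{e_1,0}(c))$ is injective as well. Therefore $|H| = |E_{e_1}|$, which gives
$$|\Ab(\pi_1(\G))| \;=\; \frac{|\Ab(\pi_1(\G'))| \cdot |V_{v_1}|}{|E_{e_1}|},$$
and the inductive hypothesis applied to $\G'$ produces the desired product formula. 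The main obstacle, more a subtlety than a difficulty, is this last injectivity check: it is precisely the place where the monomorphism condition on edge maps forces the division by edge group orders to be exact rather than merely a divisibility.
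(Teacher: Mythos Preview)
Your argument is essentially correct and follows the same inductive skeleton as the paper: induct on the number of vertices, peel off a leaf, and compare $|\Ab(\pi_1(\G))|$ with $|\Ab(\pi_1(\G'))|$. One small slip: Lemma~\ref{LEM INDX} does \emph{not} guarantee that $v_1$ is a leaf (the edge enumeration there is arbitrary, so $v_1$ can be any non-root vertex). Just pick a leaf directly, as the paper does; every finite tree with at least two vertices has one.

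Where your proof genuinely differs from the paper's is in the injectivity step that forces $|H| = |E_{e_1}|$. The paper argues that $E$ embeds into $\Ab(\pi_1(\G'))$ by invoking the fact that $[\pi_1(\G'), \pi_1(\G')]$ is free (hence torsion-free, so the finite group $E$ meets it trivially). You instead observe that $E_{e_1}$ already embeds into the leaf vertex group $V_{v_1}$ by the monomorphism axiom, and that alone makes the map $c \mapsto (\iota(c), -\phi_{e_1,0}(c))$ injective on the second coordinate. This is cleaner: it avoids any appeal to Bass--Serre theory or Stallings' theorem, and it works for an arbitrary finite tree of finite abelian groups without needing to know anything about the commutator subgroup of $\pi_1(\G')$. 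The paper's route, by contrast, implicitly uses structural facts about fundamental groups of trees of finite groups that are stronger than what the lemma actually requires.
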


\begin{proof}
The proof is by induction on $k$, the number of vertices of $\G$. If $k = 1$ then the claim is clearly true. Now suppose the claim is true for all $m < k$. Suppose $\G$ has $k$ vertices. Pick a vertex $V$ of $\G$ having degree one, and let $E$ be the unique edge incident to $V$. Let $\G'$ be the tree obtained by removing $V$ and $E$ from $\G$. By the inductive hypothesis, we have that the order of $\Ab(\pi_1(\G'))$ is the product of the orders of its vertices divided by the product of the orders of its edges. So it will suffice to show that:
$$|\Ab(\pi_1(\G))| = |\Ab(\pi_1(\G'))| \cdot \frac{|V|}{|E|}$$
Set $G' = \pi_1(\G')$. Notice that $E$ is a subgroup of one of the vertex groups of $\G'$ and therefore $E$ is a subgroup of $G'$. Since $[G', G']$ is a free group and $E$ is finite, we have $E \cap [G', G'] = \{1\}$. Therefore $E$ is a subgroup of $\Ab(G')$. We can form the abelian group $H$ by taking the direct product of $V$ and $\Ab(G')$ and then associating their $E$ subgroups. We have
$$|H| = |\Ab(\pi_1(\G'))| \cdot \frac{|V|}{|E|.}$$
Clearly $H$ is an abelian quotient of $G = \pi_1(\G)$. On the other hand, it is not difficult to see that the kernel of the map $G \rightarrow H$ is contained in $[G, G]$. Thus $H = \Ab(G)$. The statement of the lemma now follows by induction.
\end{proof}

\begin{lemma} \label{LEM EQNS}
Let $G$ be a finitely generated, virtually free, and para-$C_p^{*n}$ group. Then $G$ is the fundamental group of a finite tree of groups $\G$. The vertex groups of $\G$ are finite direct products of $C_p$'s and all of the edge groups are proper subgroups of the vertex groups they join. Moreover, if $V_1, V_2, \ldots, V_k$ and $E_1, E_2, \ldots, E_{k-1}$ are any enumerations of the vertex groups and edge groups of $\G$, then
$$\left( \frac{|V_1|}{|E_1|} \right) \left( \frac{|V_2|}{|E_2|} \right) \cdots \left( \frac{|V_{k-1}|}{|E_{k-1}|} \right) |V_k| = p^n$$
and
$$\frac{1}{|E_1|} + \frac{1}{|E_2|} + \cdots + \frac{1}{|E_{k-1}|} - \frac{1}{|V_1|} - \frac{1}{|V_2|} - \cdots - \frac{1}{|V_k|} = n - 1 - \frac{n}{p}$$
\end{lemma}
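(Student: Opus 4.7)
The plan is to first produce a tree-of-groups structure on $G$ with the desired vertex and edge group properties, then read off the two identities from combinatorial invariants of this structure. Because $G$ is finitely generated and virtually free, the Karrass-Pietrowski-Solitar theorem gives $G \cong \pi_1(\mathcal{G})$ for a finite graph $\mathcal{G}$ of finite groups. Each edge of the underlying graph lying outside a chosen spanning tree contributes a free $\mathbb{Z}$-summand to $\Ab(\pi_1(\mathcal{G}))$, because after abelianizing the relation $Y_E \phi_{E,0}(g) Y_E^{-1} = \phi_{E,1}(g)$ no longer involves $Y_E$. Since $\Ab(G) = \Ab(C_p^{*n}) = C_p^n$ is finite, this forces the underlying graph of $\mathcal{G}$ to be a tree. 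By Lemma \ref{MainLemma1} the subgroup $[G,G]$ is free, hence torsion-free, so any finite subgroup of $G$ meets $[G,G]$ trivially and embeds in $G/[G,G] \cong C_p^n$; in particular every vertex group of $\mathcal{G}$ is a direct product of copies of $C_p$. To arrange that each edge group is a proper subgroup of both adjacent vertex groups, I pass to a reduced form by iterating standard Bass-Serre collapses: whenever some edge $E$ satisfies $\phi_{E,0}(E) = V_0$, contract $E$ and absorb $V_0$ into $V_1$ via the inclusion $\phi_{E,1}\circ\phi_{E,0}^{-1}$, which leaves $\pi_1$, the tree shape, and the ``vertex groups are products of $C_p$'s'' property all intact.

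The product formula is then immediate from Lemma \ref{LEM ABTREE}: that lemma gives $|\Ab(G)| = \prod_i |V_i| / \prod_j |E_j|$, and since $|\Ab(G)| = p^n$, regrouping as $\bigl(\prod_{i<k}|V_i|/|E_i|\bigr)\cdot|V_k|$ yields the first identity for any enumerations of the vertices and edges. For the sum formula I plan to use rational Euler characteristics. For a finite group $F$ one has $\chi(F) = 1/|F|$; the Euler characteristic is multiplicative under finite-index subgroups; a free group of rank $r$ has $\chi = 1 - r$; and for the fundamental group of a finite tree of finite groups one has $\chi(\pi_1(\mathcal{G})) = \sum_i 1/|V_i| - \sum_j 1/|E_j|$. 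By Lemma \ref{MainLemma1}, the subgroups $[G,G]$ and $[C_p^{*n},C_p^{*n}]$ are free of the same rank and both have index $p^n$ in their ambient groups, so $\chi(G) = \chi(C_p^{*n})$. Computing the right side from the obvious path-of-groups decomposition of $C_p^{*n}$ (with $n$ vertex groups $C_p$ and $n-1$ trivial edge groups) gives $\chi(C_p^{*n}) = n/p - (n-1)$, and rearranging $\sum_i 1/|V_i| - \sum_j 1/|E_j| = n/p - (n-1)$ produces the second identity.

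The step I expect to be the main obstacle is the reduction to a tree of groups with proper edge inclusions. More precisely, one needs to check that the iterated Bass-Serre collapses terminate, that at every stage the new vertex groups remain products of $C_p$'s, and that the resulting $\mathcal{G}$ still has $G$ as its fundamental group. Once this reduction is in place, the remaining work is routine bookkeeping with Lemma \ref{LEM ABTREE} and the rational Euler characteristic of virtually free groups.
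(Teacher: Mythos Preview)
Your proposal is correct and follows essentially the same route as the paper: Karrass--Pietrowski--Solitar gives a finite graph of finite groups, the tree property comes from $\Ab(G)$ being finite, the vertex groups are controlled via $[G,G]$ being torsion-free (Lemma~\ref{MainLemma1}), non-proper edges are collapsed, Lemma~\ref{LEM ABTREE} yields the product identity, and the rank/index match from Lemma~\ref{MainLemma1} yields the sum identity. The only cosmetic differences are that you embed the vertex groups directly into $\Ab(G)\cong C_p^{\,n}$ (the paper instead argues $p$-group $\Rightarrow$ abelian $\Rightarrow$ exponent $p$ in three steps) and you phrase the sum identity via rational Euler characteristic rather than by citing the Karrass--Pietrowski--Solitar index--rank formula---but these are the same computation in different packaging.
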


\begin{proof}
Since $G$ is finitely generated and virtually free, it follows from A.~Karrass, A.~Pietrowski, and D.~Solitar \cite{KPS73} (see also G.~P.~Scott \cite{S74}) that $G$ is the fundamental group of a graph of groups $\G$ in which all vertex groups are finite. $\G$ must be a tree as otherwise $G = \pi_1(\G)$ maps onto $\Z$, contradicting the fact that $G$ is para-$C_p^{*n}$. Since $G$ is para-$C_p^{*n}$ each of the vertex groups of $\G$ must be a finite $p$-group. Since $[G, G]$ is torsion free the vertex groups of $\G$ must be abelian. The abelianization of $G$ is the $n$-fold direct product $C_p^n$ and therefore $g^p \in [G, G]$ for all $g \in G$. It follows that the vertex groups must be finite direct products of $C_p$'s. If any edge group is equal to one of the vertex groups in its endpoints, we remove that edge and associate its two vertex groups to one another. This allows us to assume that $\G$ has the property that each edge group is properly contained in the vertex groups at each of its ends. The commutator subgroup $[G, G]$ is torsion free and virtually free. Thus by Stallings \cite{S68}, it must in fact be free. By Lemma \ref{MainLemma1}, $[G, G]$ and $[C_p^{*n}, C_p^{*n}]$ have the same rank and same index in $G$ and $C_p^{*n}$, respectively. By A.~Karrass, A.~Pietrowski, and D.~Solitar \cite{KPS73}, knowledge of the rank and index of a free subgroup of a fundamental group of a graph of groups leads to restrictions on the orders of the vertex groups and edge groups in the graph of groups. In our case, this restriction manifests as the following equation:
$$\frac{1}{|E_1|} + \frac{1}{|E_2|} + \ldots + \frac{1}{|E_{k-1}|} - \frac{1}{|V_1|} - \frac{1}{|V_2|} - \cdots - \frac{1}{|V_k|} = n - 1 - \frac{n}{p},$$
where $V_1, V_2, \ldots, V_k$ are the vertex groups of $\G$ and $E_1, E_2, \ldots, E_{k-1}$ are the edge groups of $\G$. Finally, since $\Ab(\pi_1(\G)) = \Ab(G) = C_p^n$ we have
$$\left( \frac{|V_1|}{|E_1|} \right) \left( \frac{|V_2|}{|E_2|} \right) \cdots \left( \frac{|V_{k-1}|}{|E_{k-1}|} \right) |V_k| = p^n$$
by Lemma \ref{LEM ABTREE}.
\end{proof}

\begin{corollary} \label{COR KN}
If $G$ and $\G$ are as in the previous lemma and $k \geq n$, then $k = n$ and $G = C_p^{*n}$.
\end{corollary}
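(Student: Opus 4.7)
The plan is to translate both identities of Lemma \ref{LEM EQNS} into statements about $p$-adic valuations and then exploit a sharp inequality. Write $|V_i| = p^{a_i}$ with $a_i \geq 1$ and $|E_j| = p^{b_j}$ with $b_j \geq 0$. Using Lemma \ref{LEM INDX}, I first re-enumerate the vertices and edges so that for each $1 \leq i < k$, the vertex $V_i$ is the endpoint of $E_i$ farther from $V_k$. Since every edge group is a \emph{proper} subgroup of each of its vertex groups (by Lemma \ref{LEM EQNS}), this re-enumeration forces $a_i - b_i \geq 1$ for all $i < k$, and of course $a_k \geq 1$.

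Taking $\log_p$ of the first equation of Lemma \ref{LEM EQNS} rewrites it as
\[
\sum_{i=1}^{k-1}(a_i - b_i) + a_k \;=\; n.
\]
Since each summand is at least $1$, this gives $k \leq n$. Combined with the hypothesis $k \geq n$ it forces $k = n$, and moreover pins down equality in every summand: $a_i - b_i = 1$ for all $i < n$ and $a_n = 1$. In other words, $|V_i|/|E_i| = p$ for $i < n$ and $|V_n| = p$.

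Substituting $b_i = a_i - 1$ (for $i < n$) and $a_n = 1$ into the second equation of Lemma \ref{LEM EQNS} and simplifying reduces it to
\[
\sum_{i=1}^{n-1} p^{-(a_i - 1)} \;=\; n - 1.
\]
Since each of these $n - 1$ summands is at most $1$, equality is possible only if every summand equals $1$, i.e.\ $a_i = 1$ for all $i < n$. Combined with the earlier relations this yields $|V_i| = p$ for every $i$ and $|E_j| = 1$ for every $j$. Hence $\G$ is a tree on $n$ vertices, each labelled by $C_p$, with all edge groups trivial, and so $G = \pi_1(\G) \cong C_p^{*n}$.

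The main obstacle I anticipate is bookkeeping: the first equation only controls the differences $a_i - b_i$ and leaves the individual $a_i$'s unconstrained, so the conclusion really hinges on the simplified second equation forcing the $(n-1)$-term sum to attain its maximum termwise. Once the routine arithmetic is carried out, this maximum principle does all the remaining work.
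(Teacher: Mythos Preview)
Your argument is correct. The first half---re-indexing via Lemma~\ref{LEM INDX}, taking $\log_p$ of the product identity, and concluding $k=n$ with $a_i-b_i=1$ and $a_n=1$---is exactly what the paper does. The divergence is in how the individual $a_i$ are pinned down.

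The paper never invokes the second (Euler-characteristic) equation in this corollary. Instead it argues combinatorially on the tree: if some $|V_i|>p$, take such a vertex closest to $V_n$; the edge $E_i$ then joins $V_i$ to a vertex of order $p$, so $E_i$ (being a proper subgroup of a $C_p$) is trivial, contradicting $|V_i|/|E_i|=p$. This uses only the first equation together with the ``farther endpoint'' clause of Lemma~\ref{LEM INDX}.

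You instead substitute $b_i=a_i-1$, $a_n=1$ into the second equation and reduce it to $\sum_{i=1}^{n-1} p^{-(a_i-1)}=n-1$, then observe that each summand is at most $1$. This is a clean termwise-maximum argument and is arguably more systematic: both identities of Lemma~\ref{LEM EQNS} get used, and no further appeal to the tree structure is needed. The paper's route, by contrast, keeps the second equation in reserve for the main theorem (where $k<n$ is the hard case) and handles the present corollary with a short structural contradiction. Either approach is fine; yours trades the graph-theoretic step for a line of arithmetic.
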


\begin{proof}
We first fix an enumeration of the vertices and edges of $\G$ by applying Lemma \ref{LEM INDX}. Since all of the edge groups are properly contained in their corresponding vertex groups, we have $\frac{|V_i|}{|E_i|} \geq p$ for each $1 \leq i < k$. We also have $|V_k| \geq p$. Therefore
$$\left( \frac{|V_1|}{|E_1|} \right) \left( \frac{|V_2|}{|E_2|} \right) \cdots \left( \frac{|V_{k-1}|}{|E_{k-1}|} \right) |V_k| \geq p^k.$$
If $k \geq n$ then by the previous lemma we have $k = n$, $|V_n| = p$, and $\frac{|V_i|}{|E_i|} = p$ for each $1 \leq i < n$. If $|V_i| = p$ for each $1 \leq i < n$ then all of the vertex groups are $C_p$ and all of the edge groups are trivial (since they are proper subgroups). In this case $G = C_p^{*n}$. Towards a contradiction, suppose there is some $i$ with $|V_i| > p$. Let $V_i$ be the vertex closest to $V_n$ with $|V_i| > p$. By Lemma \ref{LEM INDX}, $E_i$ has an endpoint closer to $V_k$. This other endpoint must therefore have cardinality $p$. Since $E_i$ is a proper subgroup of that vertex group, $E_i$ must be trivial and hence $|E_i| = 1$. However, we already pointed out that $\frac{|V_i|}{|E_i|} = p$, a contradiction.
\end{proof}

\begin{theorem}
Let $p$ be an odd prime and let $G$ be a group with the same lower central series quotients as $C_p^{*n}$. If $G$ is residually nilpotent, finitely generated, and virtually free and $n \leq p$, then $G = C_p^{*n}$.
\end{theorem}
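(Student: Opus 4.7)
The plan is to combine Lemma \ref{LEM EQNS} with Corollary \ref{COR KN}. By Lemma \ref{LEM EQNS}, $G$ is the fundamental group of a finite tree of groups $\G$ with $k$ vertex groups and $k-1$ edge groups; writing $|V_j| = p^{a_j}$ for the vertex groups and $|E_i| = p^{b_i}$ for the edge groups, we have
$$\sum_{i=1}^{k-1}(a_i - b_i) + a_k = n \quad \text{and} \quad \sum_{i=1}^{k-1} p^{-b_i} - \sum_{j=1}^{k} p^{-a_j} = n - 1 - \tfrac{n}{p}.$$
Corollary \ref{COR KN} already handles the case $k \geq n$, yielding $G \cong C_p^{*n}$. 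So the only thing left to verify is that $k \geq n$, and it is precisely here that the hypothesis $n \leq p$ will be used.

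Toward that inequality, I would rewrite the second equation as
$$\sum_{i=1}^{k-1}(p^{-b_i} - p^{-a_i}) = n - 1 - \tfrac{n}{p} + p^{-a_k}$$
and then apply the trivial bound $p^{-b_i} \leq 1$ (valid because $b_i \geq 0$) to obtain $p^{-b_i} - p^{-a_i} \leq 1 - p^{-a_i}$ for each $i < k$. Summing these and rearranging produces the key inequality
$$k - n \geq \sum_{j=1}^{k} p^{-a_j} - \tfrac{n}{p}.$$
Since $\sum_j p^{-a_j} > 0$, this yields $n - k < n/p$, that is, $k > n(p-1)/p$. Under the hypothesis $n \leq p$ we have $n/p \leq 1$, so $k > n - 1$, and hence $k \geq n$ as integers. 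Corollary \ref{COR KN} then finishes the argument.

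The main obstacle in this plan is locating the correct way to bound the Euler-characteristic identity. A naive reading of the two equations from Lemma \ref{LEM EQNS} gives only $k \leq n$ (which is what Corollary \ref{COR KN} exploits); the reverse bound $k \geq n$ is not immediate and requires organizing the second equation term-by-term to isolate $p^{-a_k}$ on one side and then invoking $p^{-b_i} \leq 1$. Once this step is in place, the assumption $n \leq p$ enters cleanly as the sharpest threshold that converts the real inequality $k > n - n/p$ into the integer inequality $k \geq n$.
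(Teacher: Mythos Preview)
Your argument is correct and reaches the same conclusion as the paper, but by a genuinely different route. The paper, after invoking Corollary~\ref{COR KN} to reduce to $k\le n$, introduces a dummy edge $e_k=1$, multiplies the Euler-characteristic identity through by the largest vertex order $v_k$, and reads the resulting equation modulo $p$: when $v_k>p$ the right-hand side vanishes mod $p$ and the left-hand side is $-|S|$ where $S=\{i:v_i=v_k\}$, forcing $|S|=p$ and hence $k=n$. Your proof instead stays entirely over the reals: bounding each $p^{-b_i}$ by $1$ in the second identity gives $k\ge n-\tfrac{n}{p}+\sum_j p^{-a_j}>n-\tfrac{n}{p}$, and the hypothesis $n\le p$ is exactly what lets you round this up to $k\ge n$. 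Your approach is arguably more elementary (no congruence bookkeeping, no need to single out the largest vertex via Lemma~\ref{LEM INDX}), and it makes transparent that the threshold $n\le p$ is precisely the condition $n/p\le1$ needed for the integer rounding. The paper's mod-$p$ argument, on the other hand, ties the bound more visibly to the $p$-group structure of the vertex groups. One small expository point: your remark that ``a naive reading \ldots gives only $k\le n$ (which is what Corollary~\ref{COR KN} exploits)'' slightly misstates things---Corollary~\ref{COR KN} handles the case $k\ge n$, and the bound $k\le n$ comes from the first (multiplicative) identity together with $|V_i|/|E_i|\ge p$---but this does not affect the mathematics.
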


\begin{proof}
Let $\G$ be the tree of groups described in Lemma \ref{LEM EQNS}, and let $k$ be the number of vertices of $\G$. By the previous corollary, we may suppose that $k \leq n$. It will suffice to show that $k = n$.

Let $v_1, v_2, \ldots, v_k$ be the orders of the vertex groups of $\G$ and let $e_1, e_2, \ldots, e_{k-1}$ be the orders of the edge groups of $\G$. By Lemma \ref{LEM EQNS} we have
$$\frac{1}{e_1} + \frac{1}{e_2} + \ldots + \frac{1}{e_{k-1}} - \frac{1}{v_1} - \frac{1}{v_2} - \cdots - \frac{1}{v_k} = n - 1 - \frac{n}{p}.$$
We introduce a dummy variable $e_k$ and set $e_k = 1$. The $-1$ on the right hand side can be rewritten $-e_k$ and then be moved to the left hand side to give
$$\sum_{i = 1}^k \left( \frac{1}{e_i} - \frac{1}{v_i} \right) = n - \frac{n}{p}.$$
By Lemma \ref{LEM INDX} we can assume that the vertices and edges are ordered so that $v_i \leq v_k$ for all $1 \leq i \leq k$. Also notice that $e_i < v_i$ for $1 \leq i \leq k$ since all of the edge groups are proper subgroups of the vertex groups they join. Multiplying both sides of the above equation by $v_k$ gives
$$\sum_{i = 1}^k \left( \frac{v_k}{e_i} - \frac{v_k}{v_i} \right) = n (p-1) \left( \frac{v_k}{p} \right).$$

If $v_k = p$, then by maximality of $v_k$ we have $v_i = p$ for all $i$. Since the edge groups are all proper, it follows $e_i = 1$ for all $i$. Then by Lemma \ref{LEM ABTREE} we have that $k = n$ and $G = C_p^{*n}$. Now suppose that $v_k > p$. It follows that the right hand side of the above expression is divisible by $p$. So we have
$$0 \equiv \sum_{i = 1}^k \left( \frac{v_k}{e_i} - \frac{v_k}{v_i} \right) \equiv - \left| \{ i : 1 \leq i \leq k, \ v_i = v_k\} \right| \mod p.$$
Set $S = \{ i : 1 \leq i \leq k, \ v_i = v_k\}$. We clearly have $k \in S$ so $|S| \neq 0$. However, $|S| \leq k \leq n \leq p$. Therefore $|S| = p$ and $k = n$. Then by Corollary \ref{COR KN} we have $G = C_p^{*n}$.
\end{proof}

\section{Existence of para-$\ffp$ groups} \label{ProofSection2}
Our next two results show that in the case $\Gamma = \ffp$, there exists examples of Type II and III (please see Section \ref{PrelimSection}).
That is, weakly para-$(\ffp)$ groups which are not para-$(\ffp)$ groups exist.
And, further, para-$(\ffp)$ groups which are not isomorphic to $\ffp$ exist.
These two results give Theorem \ref{MainTheorem1} from the introduction.

\begin{theorem} \label{weaklyTheoremExample}
Let $G = \left< a, b : (a[a^p,b])^p, b^p \right>.$
Then $G$ is a weakly para-$(\ffp)$ group of Type II.
\end{theorem}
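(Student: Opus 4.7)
The proof naturally splits into two parts: showing $G$ is weakly para-$\ffp$, and showing $G$ is of Type II.

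For the weakly para part, I would build two homomorphisms in opposite directions. Let $\psi\colon G\to\ffp$ send $a\mapsto a$ and $b\mapsto b$; this is well-defined because in $\ffp$ the relator $a^p=1$ forces $[a^p,b]=1$, whence $(a[a^p,b])^p=a^p=1$. Conversely let $\bar\phi\colon\ffp\to G$ send $a\mapsto a[a^p,b]$ and $b\mapsto b$; this is well-defined from the presentation of $G$. A direct computation gives $\psi\circ\bar\phi=\mathrm{id}_{\ffp}$, so $\bar\phi$ is a section of $\psi$. Passing to $\gamma_i$ quotients, $\bar\phi_*$ is split injective, and it is surjective because $\bar\phi(a)$ and $\bar\phi(b)$ reduce modulo $[G,G]$ to $a$ and $b$, which generate the abelianization $C_p\times C_p$; Lemma \ref{generatorlemma} (applied to the nilpotent quotient $G/\gamma_i(G)$) then promotes this to generation of all of $G/\gamma_i(G)$. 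Hence $\bar\phi_*$ is an isomorphism for every $i$, establishing the weakly para-$\ffp$ property.

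For the Type II part, I would take $\phi=\mathrm{id}_F$ in the definition. Writing $\ffp=F/K$ and $G=F/N$, the well-definedness of $\psi$ gives $N\subseteq K$; since $b^p\in N$ already, the strict containment $N\subsetneq K$ is equivalent to $a^p\ne 1$ in $G$. To verify $a^p\ne 1$ in $G$, I would view $G$ as the one-relator quotient $(\Z*C_p)/\langle\langle s^p\rangle\rangle$ with $s=a[a^p,b]=a^{1+p}\,b\,a^{-p}\,b^{-1}$. The word $s$ is cyclically reduced of syllable length four, alternates between the two free factors, and a comparison of its $a$-syllables rules out $s$ being a proper power in $\Z*C_p$. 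The Freiheitssatz for one-relator products with torsion then implies that $\langle a\rangle=\Z$ embeds in $G$; in particular $a$ has infinite order in $G$ and $a^p\ne 1$, giving $N\subsetneq K$.

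The main obstacle in this plan is showing $a^p\ne 1$ in $G$. The Freiheitssatz route is clean but depends on a nontrivial external theorem whose hypotheses (cyclic reducedness, non-proper-power status, exponent $\ge 2$) must be verified carefully. An alternative, compatible with the authors' GAP-assisted discovery of this example, is to exhibit by direct computation a specific non-nilpotent finite quotient of $G$ in which $a^p$ remains non-trivial, thereby bypassing the appeal to the one-relator-product machinery.
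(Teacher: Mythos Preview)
Your proposal is correct. The weakly-para argument is essentially the paper's: both build the map $a\mapsto a[a^p,b]$, $b\mapsto b$ and show it induces an isomorphism on each nilpotent quotient. The paper works one level up, on the free group $F$, so that Lemma~\ref{generatorlemma} (stated only for \emph{free} nilpotent groups) applies literally and then descends via the Hopfian property to $\ffp/\gamma_i\to G/\gamma_i$; you instead apply the lemma directly to $G/\gamma_i(G)$, which requires the more general (but equally standard) fact that a generating set modulo the commutator subgroup generates any nilpotent group. Your splitting observation $\psi\circ\bar\phi=\mathrm{id}_{\ffp}$ is a pleasant extra that the paper does not make explicit.

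Where you genuinely diverge is in establishing $a^p\neq 1$ in $G$, the heart of the Type~II claim. You invoke the Freiheitssatz for one-relator products with torsion to get $\langle a\rangle\cong\Z\hookrightarrow G=(\Z*C_p)/\langle\!\langle s^p\rangle\!\rangle$. The paper instead passes to the further quotient $H=\langle a,b\mid a[a^p,b],\,b^p\rangle$, observes that $a^p=1$ in $G$ would force $a=1$ in $H$, and rules this out by an elementary Reidemeister--Schreier computation: the kernel of $H\to C_p$ has abelianization $\Z^p/A\Z^p$ with $|\det A|=p^p-(p-1)^p\neq 1$. Your route is short and conceptual but imports a deep theorem whose range you should pin down: Howie's general result covers exponents $\geq 4$, hence all primes $p\geq 5$, so for $p=3$ you must either cite a version that applies to $\Z*C_3$ or fall back on your computational alternative (which, as you rightly note, must target a non-nilpotent quotient, since the weakly-para property forces $a^p\in\bigcap_i\gamma_i(G)$). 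The paper's route is entirely self-contained and uniform in $p$, at the cost of an explicit matrix calculation that treats $p=3$ separately. One cosmetic slip: with the paper's convention $[x,y]=x^{-1}y^{-1}xy$ one has $s=a^{1-p}b^{-1}a^{p}b$ rather than $a^{1+p}ba^{-p}b^{-1}$; your not-a-proper-power check survives unchanged.
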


\noindent {\bf Remark:}
While one can prove this theorem directly, we decided to take a more explorative approach in this proof. For instance, Claim \ref{linearAlgLemma} in the proof of Theorem \ref{weaklyTheoremExample} distinguishes a certain family of groups from $C_p$.
Compare this claim with Theorem 1 in \cite{MS} by Miller and Schupp. As a consequence of Theorem 1 in \cite{MS}, the groups
$$\left< b, a : a[a^n, b], b = w \right>$$
where $n > 1$ and $w$ has exponent sum $0$ on $b$, are trivial.
The major difficulty in the proof of Claim \ref{linearAlgLemma} is obtaining a result that holds for \emph{all} odd primes $p$.

\paragraph{Proof of Theorem \ref{weaklyTheoremExample}.} Let $G = \left< a, b : (a[a^p, b])^p, b^p \right>$. The following technical claim is used to show that $G$ is not residually nilpotent.

\begin{claim} \label{linearAlgLemma}
The element $a$ is nontrivial in $H=\left<a, b : (a[a^p, b]), b^p\right>$.
\end{claim}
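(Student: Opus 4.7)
The plan is to establish the claim by exhibiting, for every odd prime $p$, a finite group $K$ and a homomorphism $\pi \colon H \to K$ with $\pi(a) \neq 1$.

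First I would rewrite the relator $a[a^p, b]$ in the equivalent form $b a^p b^{-1} = a^{p+1}$, giving the presentation
$$H \cong \langle a, b : b a^p b^{-1} = a^{p+1},\ b^p = 1 \rangle.$$
This form invites the search for a metacyclic quotient $K = C_N \rtimes_\lambda C_p$ in which the generator $B$ of $C_p$ acts on $C_N = \langle A \rangle$ by $A \mapsto A^\lambda$. Sending $a \mapsto A$ and $b \mapsto B$ respects the relations of $H$ precisely when
$$\lambda p \equiv p + 1 \pmod{N} \quad \text{and} \quad \lambda^p \equiv 1 \pmod{N}.$$
The first congruence formally forces $\lambda = (p+1)/p$, and then the second becomes $(p+1)^p \equiv p^p \pmod{N}$. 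This suggests the choice
$$N := (p+1)^p - p^p.$$

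Next I would verify that this choice actually produces a well-defined group. By Fermat's little theorem $(p+1)^p \equiv 1 \pmod{p}$ while $p^p \equiv 0 \pmod{p}$, so $N \equiv 1 \pmod{p}$; in particular $\gcd(N,p) = 1$, so the unit $\lambda := (p+1) p^{-1} \in (\Z/N)^\times$ is well-defined and satisfies $\lambda^p = (p+1)^p / p^p \equiv 1 \pmod{N}$. Also $N \geq 4^3 - 3^3 = 37 > 1$ for every odd prime $p$, so $K = C_N \rtimes_\lambda C_p$ is a genuine group of order $pN$ in which $A$ has order exactly $N > 1$.

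Finally I would check that $a \mapsto A$, $b \mapsto B$ extends to a homomorphism $\pi \colon H \to K$: the relator $b^p$ maps to $B^p = 1$ by construction, and the relator $a[a^p,b]$ maps to $A \cdot A^p B A^{-p} B^{-1} = A^{1 + p - \lambda p}$, which equals $1$ because $\lambda p \equiv p+1 \pmod{N}$. Since $\pi(a) = A$ has order $N > 1$, the element $a$ must be nontrivial in $H$. The main obstacle is arriving at the right modulus $N = (p+1)^p - p^p$; once this integer is in hand, the remaining verifications are elementary modular arithmetic that apply uniformly to every odd prime $p$, which is precisely the difficulty flagged in the remark preceding the claim.
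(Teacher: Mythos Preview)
Your argument is correct, and it takes a genuinely different route from the paper's. The paper applies Reidemeister--Schreier to the kernel of the map $H \to C_p$ sending $a \mapsto 0$, $b \mapsto 1$; it obtains a presentation on the $p$ conjugates of $a$, abelianizes, and shows the resulting relation matrix has determinant $\pm(p^p-(p-1)^p)\neq\pm1$, so that kernel---and hence $a$---is nontrivial. You bypass all of this by writing down a metacyclic quotient $C_N\rtimes_\lambda C_p$ in which $a$ survives. Your approach is shorter and more elementary (no Reidemeister--Schreier, no determinant), and it makes the witnessing quotient completely explicit. The two arguments are in fact the same computation viewed from opposite ends: your semidirect product is exactly $H$ modulo the commutator subgroup of that index-$p$ kernel, and your modulus $N$ is the order of its abelianization, so the integer $(p+1)^p-p^p$ is playing precisely the role the paper's determinant plays.

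One small caveat: you expand $[a^p,b]$ as $a^p b a^{-p} b^{-1}$, whereas the paper uses the convention $[x,y]=x^{-1}y^{-1}xy$ (visible in its computation $\gamma[b,\gamma]=\gamma b^{-1}\gamma^{-1}b\gamma$). Under the paper's convention the relator becomes $b^{-1}a^p b=a^{p-1}$, and your identical construction goes through with $\lambda=p(p-1)^{-1}$ and $N=p^p-(p-1)^p$, matching the paper's determinant on the nose. The invocation of Fermat's little theorem is also unnecessary: $(p+1)^p\equiv 1^p\equiv 1\pmod p$ already follows from $p+1\equiv 1\pmod p$.
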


\emph{Proof of Claim.}
Let $\phi: H \to C_p$ be the map with $a \mapsto 0$ and $b \mapsto 1$.
Using the Reidemeister-Schreier Method with Schreier basis

$$
\{ 1, b, b^2, \ldots, b^{p-1} \},
$$

\noindent
it is straightforward to show that we get the following presentation for $K:= \ker\phi$:
$$
K = \left< x_1, \ldots, x_p : x_{i_n}^{\epsilon_n p} x_{j_n}^{-\epsilon_n(p-1)} , n=1,\ldots, p\right>,
$$
where $i_n \neq j_n$ and $\epsilon_n \in \{ \pm 1\}$.  Further, the method gives that the generators $x_i$ are all conjugates of $a$ in $H$. If we can show that $K$ is nontrivial, then it will follow that $a \neq 1$ in $H$.

 The abelianization of $K$ is the group 
$\Z^p/N,$
where $N$ is the subgroup generated by $v_1,\ldots, v_p$ vectors in $\Z^p$, each $v_n$ coming from a distinct relation  $x_{i_n}^{\epsilon_n p} x_{j_n}^{-\epsilon_n(p-1)} $.
Form the matrix $A$ with the vectors $v_1,\ldots, v_p$ as rows.
With careful inspection of the presentation $K$ obtained from the Reidemeister-Schreier Method, we see that for $p > 3$ the matrix $A$ is a row-permutation of
$$
\begin{pmatrix}
-x & y & & & & & & &\\
-y & 0& x & & & & & &\\
 & -x &0 & y & & & & &\\
 & & -y &0 & x & & & &&\\
 & & & \ddots &\ddots &\ddots & & &\\
  & & & & -y & 0& x & &\\
 & & & & & -x& 0& y&0 \\
 & & & & & & 0& -x&y \\
 & & & & & & -y&0 &x 
\end{pmatrix},
$$
where $x = p$ and $y = p-1$ and all the blank spaces are assumed to be filled with zeros. From this form we compute
$$
|\det(A)| = p^p - (p-1)^p,
$$
for $p > 3$. The case $p = 3$ needs to be computed separately (this equality is also true for $p =2 $).
In the case $p = 3$, the matrix $A$ is a row-permutation of
$$
\begin{pmatrix} -3& 2 & 0 \\
0 & -3 & 2 \\
-2 & 0 & 3 \end{pmatrix},
$$
so $|\det(A)| = 19 = 3^3 - 2^3$.
Hence $|\det(A)| \neq 1$ so $A^{-1}$ cannot be integral. So $\left<v_1, \ldots, v_p\right> \neq \Z^p$.
It follows that $K$ cannot be the trivial group, as it has nontrivial abelianization, and the proof of the claim is complete.

%%%%%%%
\begin{claim}
The group $G$ is not residually nilpotent.
\end{claim}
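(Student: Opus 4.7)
My plan is to exhibit a nontrivial element of $\bigcap_k \gamma_k(G)$; the natural candidate is $a^p$. Note that $a$ has order $p$ in the abelianization $G^{\mathrm{ab}} \cong C_p \times C_p$ (the relation $(a[a^p,b])^p$ abelianizes to $a^p = 1$), so $a^p \in [G,G] = \gamma_2(G)$, giving the base case of an inductive descent down the lower central series.

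The key identity I would exploit is the following consequence of the defining relation. Setting $c = a[a^p,b]$ (so $c^p = 1$) and $v = [b, a^p]$, the equation $c = a[a^p,b]$ rearranges to $a = cv$. I then argue by induction on $k \geq 1$ that $a^p \in \gamma_k(G)$. Assuming $a^p \in \gamma_k(G)$, it follows immediately that $v = [b, a^p] \in \gamma_{k+1}(G)$. Writing $a^p = (cv)^p$ and applying the Hall--Petresco collection formula, every correction term is a commutator involving $v$ and so has weight at least $k+2$; hence $a^p \equiv c^p v^p \equiv v^p \pmod{\gamma_{k+2}(G)}$ because $c^p = 1$. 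Since $v^p \in \gamma_{k+1}(G)$, this forces $a^p \in \gamma_{k+1}(G)$, completing the induction.

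It remains to verify that $a^p \neq 1$ in $G$. For this I would invoke the natural surjection $G \twoheadrightarrow H = \langle a, b : a[a^p,b], b^p \rangle$, which is well defined because $(a[a^p,b])^p = 1$ is a consequence of $a[a^p,b] = 1$. In $H$ the relation gives $a = [b, a^p]$, so if $a^p = 1$ in $H$ then $a = [b,1] = 1$, contradicting Claim~\ref{linearAlgLemma}. Hence $a^p \neq 1$ in $H$, and lifting back, $a^p \neq 1$ in $G$. The main delicate point of the argument is the Hall--Petresco step: one must check carefully that, once $v \in \gamma_{k+1}(G)$, every commutator appearing in the expansion of $(cv)^p$ really does lie in $\gamma_{k+2}(G)$, which is where the commutator calculus needs to be handled precisely.
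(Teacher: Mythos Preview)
Your argument is correct. Both the inductive descent and the nontriviality of $a^p$ go through: once $a^p\in\gamma_k(G)$ you have $v=[b,a^p]\in\gamma_{k+1}(G)$, and since $\gamma_{k+1}(G)/\gamma_{k+2}(G)$ is central in $G/\gamma_{k+2}(G)$ the collection of $(cv)^p$ reduces to $c^p v^p$ modulo $\gamma_{k+2}(G)$, giving $a^p\equiv v^p\in\gamma_{k+1}(G)$. (In fact Hall--Petresco is more than you need here; the elementary identity $(cv)^p=c^p\,v^{c^{p-1}}\cdots v^c v$ together with $v^{c^i}\equiv v\pmod{\gamma_{k+2}(G)}$ already suffices.) Your use of Claim~\ref{linearAlgLemma} to get $a^p\neq 1$ is exactly the paper's.

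The route, however, differs from the paper's. The paper does \emph{not} prove $a^p\in\bigcap_k\gamma_k(G)$ directly. Instead it observes that the obvious surjection $\Phi:G\twoheadrightarrow \ffp$ has nontrivial kernel (because $a^p\neq 1$), and then appeals to the fact that $G$ is weakly para-$(\ffp)$ (Claim~\ref{claim1}) together with the Hopfian property of finitely generated nilpotent groups (Proposition~\ref{typeTheorem}): a Type~II group can never be residually nilpotent. Your approach is more hands-on and self-contained---it avoids the forward reference to Claim~\ref{claim1} and the abstract Type~II machinery---while the paper's approach is more conceptual and reusable (any weakly para-$\Gamma$ group admitting a proper surjection onto $\Gamma$ is automatically not residually nilpotent). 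Both arguments ultimately pin down the same witness $a^p$.
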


\emph{Proof of Claim.}
The map $\Phi: G \to \ffp = \left< \alpha, \beta : \alpha^p , \beta^p \right>$ given by $a \mapsto \alpha$ and $b \mapsto \beta$ is onto. We will show that the map $\Phi$ is not one-to-one, and hence as $G$ is weakly para-$(\ffp)$ it follows that $G$ cannot be residually nilpotent.

Indeed, we will show that $a^p \neq 1$ in $G$, and hence $\ker \Phi \neq 1$.
Let $F = F(a,b)$ be the free group of rank $2$.
If $a^p = 1$ in $G$, then $a^p$ is in the normal group generated by $(a[a^p,b])^p$ and $b^p$ in $F$.
Thus, $a^p$ is contained in the normal group generated by $(a[a^p, b])$ and $b^p$.
It follows that $a = 1$ in the group 
$$
H=\left< a, b : (a[a^p, b]), b^p \right>,
$$
contradicting Claim \ref{linearAlgLemma}. 
This finishes the proof of the claim.

\begin{claim} \label{claim1}
The group $G$ is weakly para-$(\ffp)$.
\end{claim}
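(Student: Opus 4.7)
The plan is to leverage the natural surjection $\Phi\colon G \to \ffp$ constructed in the previous claim (sending $a \mapsto \alpha$, $b \mapsto \beta$) and to show that it descends to an isomorphism at every lower central series quotient. Since $\Phi$ is surjective, the induced map $\bar\Phi_k\colon G/\gamma_k(G) \to \ffp/\gamma_k(\ffp)$ is automatically surjective, and it is an isomorphism exactly when $\ker \Phi \subseteq \gamma_k(G)$. Comparing the two presentations shows that $\ker \Phi$ equals the normal closure in $G$ of $a^p$ (the other relation $b^p$ is already a relation in $G$). Since each $\gamma_k(G)$ is normal in $G$, the whole problem reduces to verifying that $a^p \in \gamma_k(G)$ for every $k \geq 1$.

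I would prove $a^p \in \gamma_k(G)$ by induction on $k$. The base case $k = 2$ follows from the computation $G^{\mathrm{Ab}} \cong C_p \times C_p$: abelianizing the relation $(a[a^p,b])^p = 1$ collapses the commutator and yields $a^p = 1$ in $G^{\mathrm{Ab}}$, while $b^p = 1$ is already imposed, so $a^p, b^p \in [G,G] = \gamma_2(G)$. For the inductive step, assume $a^p \in \gamma_k(G)$ and set $w = [a^p, b]$; then $w \in [\gamma_k(G), G] \subseteq \gamma_{k+1}(G)$. Passing to the quotient $G/\gamma_{k+1}(G)$, one has $\overline{aw} = \overline{a}$, so the defining relation $(aw)^p = 1$ in $G$ projects to $\overline{a}^p = \overline{(aw)^p} = \overline{1}$, that is, $a^p \in \gamma_{k+1}(G)$.

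The argument contains no serious obstacle. The only point worth flagging is that the step $\overline{(aw)^p} = \overline{a}^p$ in the nilpotent quotient needs no Hall--Petresco-style commutator expansion; it is simply the statement that the quotient map is a homomorphism, applied to the already-established identity $\overline{aw} = \overline{a}$ in $G/\gamma_{k+1}(G)$. Assembling the induction gives $\ker\Phi \subseteq \bigcap_k \gamma_k(G)$, hence $\bar\Phi_k$ is an isomorphism for every $k$, and so $G$ is weakly para-$(\ffp)$, completing the proof.
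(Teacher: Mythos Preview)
Your proof is correct, but it follows a different route from the paper's.

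The paper works in the other direction: it defines the endomorphism $\Phi\colon F\to F$ of the free group by $a\mapsto a[a^p,b]$, $b\mapsto b$, observes that $\Phi$ induces the identity on $F/[F,F]$, and then invokes Lemma~\ref{generatorlemma} to conclude that $\Phi$ is surjective on each free nilpotent quotient $F/\gamma_i(F)$; since finitely generated nilpotent groups are Hopfian, each induced map is an automorphism. Because this automorphism sends the relator set $\{a^p,b^p\}$ of $\ffp$ to the relator set $\{(a[a^p,b])^p,b^p\}$ of $G$, it descends to an isomorphism $\ffp/\gamma_i(\ffp)\to G/\gamma_i(G)$.

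Your argument instead uses the ``obvious'' surjection $G\to\ffp$ (identity on generators) and proves directly, by induction on $k$, that $a^p\in\gamma_k(G)$; the defining relation $(a[a^p,b])^p=1$ together with $[a^p,b]\in\gamma_{k+1}(G)$ immediately bootstraps $a^p\in\gamma_k(G)$ to $a^p\in\gamma_{k+1}(G)$. This is more elementary---it avoids Lemma~\ref{generatorlemma} and the Hopfian property altogether---and gives the extra information $\ker(G\to\ffp)\subseteq\bigcap_k\gamma_k(G)$. The paper's approach, on the other hand, is more uniform: it shows at once that \emph{any} substitution $a\mapsto a\cdot c$ with $c\in[F,F]$ produces a group with the same nilpotent quotients, which is exactly what is reused verbatim in Claim~\ref{paraffpclaim}.
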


\emph{Proof of Claim.} Let $F = F(a,b)$ be a free group of rank two.
Let $\Phi : F \to F$ be the map given by $a \mapsto a[a^p, b]$ and $b \mapsto b$.
This gives a well defined map $\Phi': F_i \to F_i$ where $F_i := F/\gamma_i(F)$.
This is an epimorphism by Lemma \ref{generatorlemma}.
Since finitely generated nilpotent groups are Hopfian, $\Phi'$ must be an isomorphism.
Therefore the induced map on $\ffp/\gamma_i(\ffp) \to G/\gamma_i(G)$ is an isomorphism, as claimed.

As $G$ is weakly para-$(\ffp)$ and is not residually nilpotent, the proof of Theorem \ref{weaklyTheoremExample} is complete. \qed

\begin{theorem} \label{stronglyTheoremExample}
Let $G = \left< a, b :  (a [b, a])^p, b^p \right>.$
Then $G/ \cap_{k=1}^\infty \gamma_k(G)$ is a para-$(\ffp)$ group of Type III.
\end{theorem}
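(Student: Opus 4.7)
Set $\bar{G} := G / \bigcap_{k=1}^\infty \gamma_k(G)$, which is residually nilpotent by construction. The plan is to show $\bar{G}$ is para-$(\ffp)$ (so by Proposition \ref{typeTheorem} not of Type II) and then rule out Type I by proving that no surjection $G \twoheadrightarrow \ffp$ exists.

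First, I would establish that $\bar{G}$ is weakly para-$(\ffp)$ exactly as in Claim \ref{claim1}. The endomorphism $\Phi \colon F(a,b) \to F(a,b)$ given by $a \mapsto a[b,a]$, $b \mapsto b$ is the identity modulo the commutator subgroup, so by Lemma \ref{generatorlemma} it descends to an epimorphism on each free nilpotent quotient $F/\gamma_i(F)$, which is an automorphism by Hopfianity. Since $\Phi$ carries the normal closure of $\{a^p, b^p\}$ into that of $\{(a[b,a])^p, b^p\}$, the induced map $(\ffp)/\gamma_i(\ffp) \to G/\gamma_i(G) = \bar{G}/\gamma_i(\bar{G})$ is an isomorphism. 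Combined with residual nilpotence, $\bar{G}$ is para-$(\ffp)$, and hence not of Type II.

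The main step is to rule out Type I, i.e., to show $\bar{G} \not\cong \ffp$. I would argue by contradiction: any such isomorphism composed with the quotient $G \twoheadrightarrow \bar{G}$ would give a surjection $\psi \colon G \twoheadrightarrow \ffp$. Setting $\alpha = \psi(a)$ and $\beta = \psi(b)$, the pair $(\alpha, \beta)$ would generate $\ffp$ and, from the defining relations of $G$, satisfy $\beta^p = 1$ and $(\alpha[\beta,\alpha])^p = 1$ in $\ffp$. I would show that no such pair exists. By Lemma \ref{NormalFfpForm} applied to $\beta$, and after conjugating and possibly applying the outer automorphism of $\ffp$ that swaps $a$ and $b$, we may assume $\beta = b^j$ with $\gcd(j, p) = 1$. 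Generation of $\ffp$ by $\alpha$ and $b^j$ then forces $\alpha \notin \langle b \rangle$, so the reduced normal form of $\alpha$ contains at least one $a$-syllable. A case analysis on the first and last syllables of $\alpha$, tracking the possible cancellations at the four junctures of the word $\alpha\beta^{-1}\alpha^{-1}\beta\alpha$ and at the cyclic wrap between its last and first syllables, shows that the cyclically reduced length of $\alpha[\beta,\alpha]$ is at least $2$. By Lemma \ref{NormalFfpForm} this element therefore has infinite order in $\ffp$, contradicting $(\alpha[\beta,\alpha])^p = 1$.

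Hence no surjection $G \twoheadrightarrow \ffp$ exists, so $\bar{G} \not\cong \ffp$ and $\bar{G}$ is of Type III. The main obstacle is the normal-form case analysis in the last step; although each individual case is routine, one must enumerate the configurations (depending on whether $\alpha$'s first and last syllables lie in $\langle a \rangle$ or $\langle b \rangle$) and carefully track the potential collapses at the boundaries with $\beta^{\pm j}$ and under the cyclic identification of the first and last syllables.
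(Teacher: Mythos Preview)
Your proposal is correct and follows essentially the same strategy as the paper: establish the para-$(\ffp)$ property via the endomorphism $a\mapsto a[b,a]$, $b\mapsto b$ exactly as in Claim~\ref{claim1}, then rule out $\bar{G}\cong\ffp$ by a normal-form case analysis showing that $\alpha[\beta,\alpha]$ has infinite order whenever $\beta$ lies in a conjugate of a factor and $\alpha$ does not. The paper packages this last step as a standalone result (Claim~\ref{infOrderLemma}, proving that $\gamma[b,\gamma]$ has infinite order for every $\gamma\notin\langle b\rangle$) and reduces all the way to $\beta=b$ by re-choosing the free-product generators, whereas you stop at $\beta=b^{j}$; the four-case syllable analysis is otherwise identical.
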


\paragraph{Proof of Theorem \ref{stronglyTheoremExample}.}
Let $H = G/\cap_{k=1}^\infty \gamma_k(G)$.
Our first claim gives a technical result that will aid us in distinguishing $H$ from $\ffp$.

\begin{claim} \label{infOrderLemma}
Let $\gamma \in \ffp = \left< a, b : a^p, b^p\right> $ be a nontrivial element not equal to a power of $b$.
Then the element $\gamma [b,\gamma]$ has infinite order.
\end{claim}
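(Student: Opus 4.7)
The plan is to invoke Lemma \ref{NormalFfpForm} together with the standard fact that every nontrivial finite-order element of $\ffp$ is conjugate to a power of $a$ or a power of $b$ (nontrivial finite subgroups of $\ffp$ are conjugate into $\langle a\rangle$ or $\langle b\rangle$, and so have order $p$). Such an element therefore has cyclically reduced normal form of length exactly $1$, so it suffices to show that $w := \gamma[b,\gamma] = \gamma b^{-1}\gamma^{-1}b\gamma$ has cyclically reduced normal form of length at least $2$.

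I would write $\gamma = g_1 g_2 \cdots g_k$ in reduced normal form (with $k \geq 1$) and split into cases based on whether the boundary syllables $g_1$ and $g_k$ are powers of $a$ or powers of $b$. The hypothesis that $\gamma$ is not a power of $b$ forces at least one $g_i$ to be a nontrivial power of $a$. The model case is when both $g_1$ and $g_k$ are powers of $a$: then no cancellation occurs at any of the four internal junctures defining $w$, so $w$ has reduced normal form of length exactly $3k+2$, and the only further reduction possible is a cyclic cascade at the wrap between the final $g_k$ of the rightmost $\gamma$-copy and the initial $g_1$ of the leftmost $\gamma$-copy. Such a cascade would peel off matched pairs $g_{k-j+1} \cdot g_j$. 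Since $g_1$ and $g_k$ are both $a$-powers, $k$ is odd, and a full cascade would eventually require $g_{(k+1)/2}^{\,2} = 1$; because $p$ is odd, no nontrivial power of $a$ or $b$ satisfies this. Hence the cascade must halt, leaving a cyclically reduced word of length at least $2k+2 \geq 4$.

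The remaining cases (where $\gamma$ begins and/or ends with a power of $b$) involve partial cancellation between the central $b^{\pm 1}$ and adjacent $b$-syllables of $\gamma^{\pm 1}$. I would treat these by an analogous case analysis, further subdividing on whether the boundary $b$-exponents $t_1$, $t_k$, or their sum, are congruent to $0 \pmod{p}$; in each subcase direct computation shows that $w$ has reduced length at least $3k-2$ and that cyclic reduction terminates with at least two syllables remaining, the $a$-syllables guaranteed by the hypothesis serving as witnesses that separate the surviving $b$-type remnants. The main obstacle is the detailed bookkeeping in these $b$-boundary cases, where the cyclic cascade at the wrap can propagate further inward; as in the model case, the alternation of the normal form together with the oddness of $p$ is what ultimately forces the cascade to stop before collapsing the cyclic word to length $1$.
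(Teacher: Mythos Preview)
Your overall strategy---reduce to showing that the cyclically reduced normal form of $w=\gamma b^{-1}\gamma^{-1}b\gamma$ has length at least $2$, then split into four cases on the types of $g_1$ and $g_k$---is the same as the paper's, and your Case~1 cascade argument is correct: the parity observation that a full cascade would force $g_{(k+1)/2}^2=1$, impossible for odd $p$, is exactly the right obstruction.

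Where your approach diverges is in the bookkeeping, and the paper sidesteps essentially all of it with one trick: rather than analyse cyclic reduction of $w$ directly, pass first to a convenient conjugate. In Case~1 the paper notes that $\gamma[b,\gamma]$ is conjugate to $b^{-1}\gamma^{-1}b\,\gamma^2$; since $\gamma$ begins and ends with $a$-powers (and $\gamma^2\neq 1$ as $p$ is odd), this conjugate has reduced normal form beginning with $b^{-1}$ and ending with an $a$-power, so it is already cyclically reduced of length $\geq 2$---no cascade needed. In Cases~3 and~4 the paper likewise writes down the reduced form and observes that the first and last syllables lie in different free factors. In Case~2 it either does the same (when the boundary $b$-exponents do not sum to zero) or strips off the outer $b$-syllables and reduces to Case~1 with $\gamma'=g_2\cdots g_{k-1}$.

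So your proposal is correct in outline, but the ``detailed bookkeeping in the $b$-boundary cases'' that you flag as the main obstacle is something the paper avoids entirely. The conjugation trick is worth internalising: to certify infinite order in a free product, it is enough to exhibit \emph{any} conjugate whose reduced form begins and ends in different factors.
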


\emph{Proof of Claim.}
By Lemma \ref{NormalFfpForm}, we write $\gamma$ as in Equation \ref{normalform}.
For some $n\neq0$, one of the following happens:
\begin{enumerate}
\item[Case 1.] $g_1 = a^m$ and $g_k = a^n$ for some $m,n\neq0$:
The element $\gamma[b, \gamma]$ is conjugate to
\[
b^{-1} \gamma^{-1} b \gamma^2.
\]
The element $\gamma^2$ either has normal form
$$
g_1 \cdots g_{k-m} \alpha g_{m} \cdots g_k,
$$
where $\alpha \in \{ a, a^{2}, \ldots, a^{p-1}, b, b^{2}, \ldots, b^{p-1} \}$ or $\gamma^2$ has normal form $\alpha \in \{ 1, a, a^{2}, \ldots, a^{p-1} \}$. The equality $\gamma^2 = 1$ is impossible in $\ffp$ as $p \neq 2$. 
It follows that $
b^{-1} \gamma^{-1} b \gamma^2$ has normal form starting with $b^{-1}$ and ending with a power of $a$, thus $\gamma[b,\gamma]$ must have infinite order.

\item[Case 2.] $g_1 = b^m$ and $g_k = b^n$: In this case, because $\gamma$ is not a power of $b$, we have $k > 2$ and
\begin{eqnarray*}
\gamma [b, \gamma] &=& \gamma  b^{-1} \gamma^{-1} b \gamma \\
&=& b^m g_2 \cdots b^n b^{-1} b^{-n} \cdots b^{-m} b b^m \cdots g_{k-1} b^n.
\end{eqnarray*}
Hence, $\gamma [b, \gamma]$ is conjugate to
$$
 b^{m+n} g_{2} \cdots g_{k-1} b^{-1} g_{k-1}^{-1} \cdots g_{2}^{-1} b g_2 \cdots g_{k-1}.
$$
If $m+n \neq 0$, then this normal form begins with a power of $b$ and ends with a power of $a$. So $(\gamma[b,\gamma])^n$ is never trivial.
Otherwise, if $m+n = 0$, we appeal to Case 1, replacing $\gamma$ with $g_2 \cdots g_{k-1}$.

\item[Case 3.] $g_1 = b^m$ and $g_k = a^n$: In this case we have
\begin{eqnarray*}
\gamma [b, \gamma] &=& \gamma  b^{-1} \gamma^{-1} b \gamma \\
&=& b^m \cdots a^n b^{-1} a^{-n} \cdots b^{-m} b b^m \cdots a^n \\
&=&b^m \cdots a^n b^{-1} a^{-n} \cdots g_{2}^{-1} b g_2 \cdots a^n,
\end{eqnarray*}
which is in normal form. Since this form begins with a power of $b$ and ends with a power of $a$, we see that $\gamma[b,\gamma]$ has infinite order.

\item[Case 4.] $g_1 = a^m$ and $g_k = b^n$: In this final case we have
\begin{eqnarray*}
\gamma [b, \gamma] &=& \gamma  b^{-1} \gamma^{-1} b \gamma \\
&=& a^m \cdots b^n b^{-1} b^{-n} \cdots a^{-m} b a^m \cdots b^n \\
&=& a^m \cdots g_{k-1} b^{-1} g_{k-1}^{-1} \cdots a^{-m} b a^m \cdots b^n.
\end{eqnarray*}
Just as in Case 3, an element with this normal form must have infinite-order.
\end{enumerate}

\begin{claim} \label{paraffpclaim}
The group $H$ is para-$(\ffp)$.
\end{claim}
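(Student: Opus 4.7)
The plan is to reduce Claim \ref{paraffpclaim} to showing that $G$ itself is weakly para-$(\ffp)$, and then to mimic the endomorphism-Hopficity argument of Claim \ref{claim1}, using the map $a \mapsto a[b,a]$, $b \mapsto b$ in place of $a \mapsto a[a^p,b]$, $b \mapsto b$.

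For the reduction, I would set $N = \bigcap_{k=1}^\infty \gamma_k(G)$, so that $H = G/N$. Because $N \subseteq \gamma_k(G)$ for every $k$, the lower central series of $H$ satisfies $\gamma_k(H) = \gamma_k(G)/N$, which gives both $H/\gamma_k(H) \cong G/\gamma_k(G)$ and $\bigcap_k \gamma_k(H) = \{1\}$. Thus $H$ is residually nilpotent automatically, and it remains only to show that $G/\gamma_k(G) \cong \ffp/\gamma_k(\ffp)$ for every $k$.

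For this, I would take $F = F(a,b)$ and define the endomorphism $\Phi: F \to F$ by $a \mapsto a[b,a]$, $b \mapsto b$. The images of $a$ and $b$ still project to generators of $F/[F,F]$, so Lemma \ref{generatorlemma} ensures that the induced map on each $F/\gamma_i(F)$ is surjective, and Hopficity of finitely generated nilpotent groups then upgrades it to an automorphism $\Phi_i$. Since $\Phi(a^p) = (a[b,a])^p$ and $\Phi(b^p) = b^p$, the automorphism $\Phi_i$ sends the image of $\langle\!\langle a^p, b^p \rangle\!\rangle$ in $F/\gamma_i(F)$ bijectively onto the image of $\langle\!\langle (a[b,a])^p, b^p \rangle\!\rangle$, and therefore descends to an isomorphism $\ffp/\gamma_i(\ffp) \xrightarrow{\cong} G/\gamma_i(G)$, completing the proof.

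I do not anticipate a genuine obstacle: the claim is essentially a repetition of Claim \ref{claim1} with a simpler substitution, so the only point requiring attention is the verification that $\Phi_i$ maps the relevant normal closure \emph{onto} (not merely into) the other, which is immediate from $\Phi_i$ being a bijection of $F/\gamma_i(F)$. The substantive content of Theorem \ref{stronglyTheoremExample} lies elsewhere — in Claim \ref{infOrderLemma} and in the subsequent argument (still to come) that $H$ is not isomorphic to $\ffp$, whence $H$ will be of Type III.
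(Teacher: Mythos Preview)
Your proposal is correct and follows essentially the same approach as the paper: the paper defines the same endomorphism $\Phi:F\to F$, $a\mapsto a[b,a]$, $b\mapsto b$, invokes the argument of Claim~\ref{claim1} to conclude $G$ is weakly para-$(\ffp)$, and then observes $H/\gamma_k(H)=G/\gamma_k(G)$ together with the residual nilpotence of $H$. Your write-up is in fact more detailed than the paper's, which simply says ``follow the proof of Claim~\ref{claim1}.''
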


\emph{Proof of Claim.}
Let $\Phi : F \to F$ be the map given by $a \mapsto a[b, a]$ and $b \mapsto b$, then follow the proof of Claim \ref{claim1}, to conclude that $G$ is weakly para-$(\ffp)$.
Since $H/\gamma_k(H) = G/\gamma_k(G)$ for all $k$ and $H$ is residually nilpotent, the claim is shown.
\qed

\begin{claim}
The group $H$ is not isomorphic to $\ffp$.
\end{claim}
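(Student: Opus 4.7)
Suppose for contradiction that $\phi \colon H \to \ffp$ is an isomorphism, and set $\gamma = \phi(a)$ and $\beta = \phi(b)$. The defining relations $b^p = 1$ and $(a[b,a])^p = 1$ hold in $H$, so they give $\beta^p = 1$ and $(\gamma[\beta,\gamma])^p = 1$ in $\ffp$. Since $\Ab(H) \cong \Ab(\ffp) \cong C_p \times C_p$ with $a$ and $b$ as independent generators, both $\gamma$ and $\beta$ are nontrivial, and $\gamma$ is not a power of $\beta$ (this already fails in the abelianization). In particular $\beta$ has order exactly $p$.

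The plan is to apply an automorphism $\Psi$ of $\ffp$ to move $\beta$ onto the standard generator $b$, so that Claim \ref{infOrderLemma} can be invoked. By Lemma \ref{NormalFfpForm} (applied in its standard form: every finite order element of the free product $C_p * C_p$ is conjugate to an element of one of the factors), $\beta$ is conjugate to a nontrivial power of $a$ or of $b$. The automorphism group of $\ffp$ contains all inner automorphisms, the factor swap $a \leftrightarrow b$, and the scaling maps $a \mapsto a^i$, $b \mapsto b^j$ for $i,j$ coprime to $p$ (these respect the relations $a^p$ and $b^p$, and have obvious inverses). Composing the appropriate maps from this list produces an automorphism $\Psi$ of $\ffp$ with $\Psi(\beta) = b$. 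Setting $\gamma' = \Psi(\gamma)$, the element $\gamma'$ is nontrivial and is not a power of $b$ (else $\gamma$ would be a power of $\beta$), and
\[
\Psi(\gamma[\beta,\gamma]) = \gamma'[b,\gamma'].
\]
By Claim \ref{infOrderLemma}, $\gamma'[b,\gamma']$ has infinite order in $\ffp$, hence so does $\gamma[\beta,\gamma]$. This contradicts $(\gamma[\beta,\gamma])^p = 1$, so no such $\phi$ exists.

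The main obstacle is the construction of $\Psi$: one must check that the scaling and swap maps really are automorphisms of $\ffp$, handle the two cases (in which $\beta$ is conjugate to a power of $a$ versus a power of $b$) uniformly, and track that $\gamma'$ remains off the cyclic subgroup generated by $b$ after the reduction, so that the hypothesis of Claim \ref{infOrderLemma} is satisfied.
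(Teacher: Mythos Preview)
Your proof is correct and follows essentially the same approach as the paper: both arguments reduce to the situation where the image of $b$ is the standard generator of $\ffp$ and then invoke Claim \ref{infOrderLemma} to contradict $(a[b,a])^p=1$. Your version is more explicit about the automorphism effecting this reduction (including the scaling maps needed when $\beta$ is conjugate to a proper power of a generator), whereas the paper simply switches $\alpha$ and $\beta$, conjugates, and relabels via Lemma \ref{NormalFfpForm}.
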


\emph{Proof of Claim.}
Suppose, for the sake of a contradiction, that $H= \left< \alpha, \beta : \alpha^p, \beta^p \right>$.
By abuse of notation, let $a$ and $b$ be the images of $a$ and $b$ under the projection $G \to H$.
Then by switching $\alpha$ and $\beta$ if necessary and by applying Lemma \ref{NormalFfpForm}, we conclude that $\beta$ is conjugate to $b$.
Hence by suitably relabeling variables we may assume, without loss of generality, that $H = \left< \alpha, b : \alpha^p, b^p \right>$. But we also have that $H$ is a quotient of $G = \left< a, b : (a[b,a])^p, b^p \right>$, so the element $a[b,a]$ has order $p$ in $H$. Further, $a$ is nontrivial and not a power of $b$. But then $a[b,a]$ must have infinite order by Claim \ref{infOrderLemma}. \qed

\subsection{Generalizing to $C_{p^l}*C_{p^k}$}
\label{generalizationSection}
We outline here a proof of the generalization of Theorem \ref{MainTheorem1} to $C_{p^l}*C_{p^k}$.

\begin{theorem} \label{MainTheorem2}
Let $p$ be an odd prime and $l,k$ be natural numbers.
There exist rank two groups $G_1$ and $G_2$, both not isomorphic to $C_{p^l}*C_{p^k}$, such that
 $$G_1/\gamma_i(G_1) \cong G_2/\gamma_i(G_2) \cong C_{p^l}*C_{p^k} / \gamma_i(C_{p^l}*C_{p^k})$$
 for all $i$. Further, $G_1$ is residually nilpotent and $G_2$ is not.
\end{theorem}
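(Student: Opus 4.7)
The plan is to follow the proof of Theorem \ref{MainTheorem1} essentially verbatim, with $p$ replaced by $p^l$ and $p^k$ in the appropriate places in the defining relations. Set
\[
G_2 := \langle a, b : (a[a^{p^l},b])^{p^l},\ b^{p^k} \rangle \quad\text{and}\quad G_1' := \langle a, b : (a[b,a])^{p^l},\ b^{p^k} \rangle,
\]
and let $G_1 := G_1' / \bigcap_{i\geq 1} \gamma_i(G_1')$, so that $G_1$ is residually nilpotent by construction. The weak parafree property for both groups follows as in Claim \ref{claim1}: the endomorphisms of $F = F(a,b)$ sending $b \mapsto b$ together with $a \mapsto a[a^{p^l},b]$ or $a \mapsto a[b,a]$ act trivially on the abelianization, so by Lemma \ref{generatorlemma} and the Hopfian property of finitely generated nilpotent groups they induce isomorphisms on each $F/\gamma_i(F)$. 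Transferring the defining relations then yields $G_j/\gamma_i(G_j) \cong (C_{p^l}*C_{p^k})/\gamma_i(C_{p^l}*C_{p^k})$ for all $i$ and $j \in \{1,2\}$.

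To show $G_2$ is not residually nilpotent I would follow Theorem \ref{weaklyTheoremExample} and reduce to proving $a \neq 1$ in $H := \langle a, b : a[a^{p^l},b],\ b^{p^k}\rangle$. Applying Reidemeister--Schreier to the index-$p^k$ subgroup $K \leq H$ that is the kernel of the retraction $a \mapsto 0,\ b \mapsto 1$ onto $C_{p^k}$, one obtains a presentation of $K$ on $p^k$ generators, each a conjugate of $a$, with $p^k$ relations of the form $x_{i_n}^{\epsilon_n p^l}\, x_{j_n}^{-\epsilon_n(p^l-1)}$. Abelianizing gives a sparse $p^k \times p^k$ integer matrix $A$ with entries $\pm p^l$ and $\mp(p^l-1)$, in exactly the block pattern that appeared in Claim \ref{linearAlgLemma}; the determinant calculation there generalizes to $|\det(A)| = p^{l p^k} - (p^l-1)^{p^k}$, which is strictly greater than $1$ for every odd prime $p$ and all $l, k \geq 1$. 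Hence $K$ has nontrivial abelianization, $a \neq 1$ in $H$, and consequently the surjection $G_2 \twoheadrightarrow C_{p^l}*C_{p^k}$ has nontrivial kernel, so $G_2$ cannot be residually nilpotent.

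For the final step, that $G_1 \not\cong C_{p^l}*C_{p^k}$, I would establish the analog of Claim \ref{infOrderLemma}: every nontrivial $\gamma \in C_{p^l}*C_{p^k}$ that is not a power of $b$ has $\gamma[b,\gamma]$ of infinite order. The four-case normal-form analysis goes through once one enlarges the syllable alphabets to $\{a,\ldots,a^{p^l-1}\}$ and $\{b,\ldots,b^{p^k-1}\}$; the only use of ``$p$ odd'' was to rule out $\gamma^2 = 1$, which remains valid since $C_{p^l}*C_{p^k}$ has no involutions when $p$ is odd (finite-order elements are conjugate into a vertex group, both of odd order, by the natural generalization of Lemma \ref{NormalFfpForm}). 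Assuming for contradiction that $G_1 \cong C_{p^l}*C_{p^k}$, that normal form lets us conjugate the image of $b$ onto a standard generator and relabel, after which the relation $(a[b,a])^{p^l} = 1$ in $G_1$ contradicts the infinite-order conclusion applied to the image of $a$. The main obstacle will be the uniform determinant computation: extending the ad hoc block-by-block argument of Claim \ref{linearAlgLemma} to matrices of size $p^k \times p^k$ with entries $\pm p^l$ and $\mp(p^l-1)$ for arbitrary $l, k \geq 1$ and all odd primes $p$.
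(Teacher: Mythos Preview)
Your proposal is correct and follows the paper's approach essentially verbatim: the paper uses exactly your $G_1'$ and $G_2$, and invokes the analogs of Claim~\ref{claim1}, Claim~\ref{infOrderLemma}, and Claim~\ref{linearAlgLemma} just as you do.

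The one noteworthy difference concerns the auxiliary group $H$ in the $G_2$ argument. You take $H=\langle a,b : a[a^{p^l},b],\, b^{p^k}\rangle$ and apply Reidemeister--Schreier to the index-$p^k$ kernel, which leads to a $p^k\times p^k$ relation matrix and the determinant $(p^l)^{p^k}-(p^l-1)^{p^k}$ that you correctly identify (the matrix is in fact circulant, so this follows cleanly without reproducing the ad~hoc block structure of Claim~\ref{linearAlgLemma}). The paper instead assumes without loss of generality that $l\leq k$, passes first to the further quotient $H=\langle a,b : a[a^{p^l},b],\, b^{p^l}\rangle$, and only then invokes Claim~\ref{linearAlgLemma}; since both exponents are now $p^l$, the claim applies verbatim with $p$ replaced by $p^l$, and nontriviality of $a$ then lifts back to $H'=\langle a,b : a[a^{p^l},b],\, b^{p^k}\rangle$ via the obvious surjection $H'\twoheadrightarrow H$. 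The paper's reduction avoids the ``uniform determinant computation'' you flag as the main obstacle, at the cost of the extra quotient step; your route is more direct but requires the circulant computation. Either way the argument goes through.
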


\begin{proof}
For $G_1$, set $G = \left< a, b : (a[b,a])^{p^l}, b^{p^k} \right>$.
The proof of Theorem \ref{stronglyTheoremExample} can be followed almost verbatim to conclude that $G_1 =  G/\cap_{i=1}^\infty \gamma_i(G)$ has the desired properties.

For $G_2$, suppose $l < k$.
Set $G_2 = \left< a, b : (a[a^{p^l}, b])^{p^l}, b^{p^k} \right>$ and $H =\left< a, b : a[a^{p^l}, b], b^{p^l} \right>$.
Now follow the proof of Claim \ref{linearAlgLemma} to conclude that $a \neq 1$ in $H$.
Thus, $a \neq 1$ in $H' = \left< a, b : a[a^{p^l}, b], b^{p^k} \right>$.
From this, we conclude that $a^{p^l} \neq 1$ in $G_2$.
Following the rest of the proof of Theorem \ref{weaklyTheoremExample} shows that $G_2$ has the desired properties.
\end{proof}

\section{Final remarks} \label{finalremarks}

This note is partially motivated by the desire to understand topological applications to Cochran and Harvey's paper \cite{cochran}.
Recall that the \emph{$p$-lower central series} $\{G_{p,n} \}$ is the fastest descending central series with successive quotients that are $\Z_p$-vector spaces.
In 1965, J. Stallings gave homological conditions that ensure that a group homomorphism induces an isomorphism modulo any term of the $p$-lower central series (see \cite{S65}).
The result of the previous section addresses the existence of a converse to Stalling's theorem, as we have constructed groups $G$ that are isomorphic to $\ffp$ modulo any term of the $p$-lower central series but are not $\ffp$.
However, we have been unable to show that our residually nilpotent example is finitely presented.
In light of this, we ask the following question.

\begin{question}
Does there exist a finitely presented para-$(\ffp)$ group which is not $\ffp$?
\end{question}

\end{document}